\documentclass[12pt]{amsart}

\usepackage{fullpage,url,amssymb,amsmath,amsthm,amsfonts,mathrsfs,hyperref,enumitem}
\usepackage[alphabetic,lite,nobysame]{amsrefs}
\usepackage[all, cmtip]{xy} 
\usepackage{ulem}

\usepackage{color} 

\DeclareFontEncoding{OT2}{}{} 
\newcommand{\textcyr}[1]{{\fontencoding{OT2}\fontfamily{wncyr}\fontseries{m}\fontshape{n}\selectfont #1}}
\newcommand{\Sha}{{\mbox{\textcyr{Sh}}}}

\newcommand{\Z}{{\mathbb Z}}
\newcommand{\Q}{{\mathbb Q}}

\newcommand{\F}{{\mathbb F}}

\newcommand{\calO}{\mathcal{O}}

\newcommand{\defi}[1]{\textsf{#1}} 

\DeclareMathOperator{\End}{End}
\DeclareMathOperator{\res}{res}
\DeclareMathOperator{\Gal}{Gal}
\DeclareMathOperator{\HH}{H}
\DeclareMathOperator{\Aut}{Aut}
\DeclareMathOperator{\GL}{GL}


\newtheorem{Theorem}{Theorem}[section]
\newtheorem{Lemma}[Theorem]{Lemma}
\newtheorem{Proposition}[Theorem]{Proposition}
\newtheorem{Corollary}[Theorem]{Corollary}
\newtheorem{Definition}[Theorem]{Definition}

\newtheorem{Remark}[Theorem]{Remark}

\numberwithin{equation}{section}
\numberwithin{table}{section}

\begin{document}

\title{The local-global principle for divisibility in CM elliptic curves}

\author{Brendan Creutz}

\author{Sheng(Victor) Lu}

\begin{abstract}
	We consider the local-global principle for divisibility in the Mordell-Weil group of a CM elliptic curve defined over a number field. For each prime $p$ we give sharp lower bounds on the degree $d$ of a number field over which there exists a CM elliptic curve which gives a counterexample to the local-global principle for divisibility by a power of $p$. As a corollary we deduce that there are at most finitely many elliptic curves (with or without CM) which are counterexamples with $p > 2d+1$. We also deduce that the local-global principle for divisibility by powers of $7$ holds over quadratic fields.
\end{abstract}

\maketitle

\section{Introduction}%

	Let $E/k$ be an elliptic curve over a number field $k$. We say that a $k$-rational point $P \in E(k)$ is divisible by the integer $N$ if there exists $Q \in E(k)$ such that $NQ = P$. The question motivating this paper is the extent to which this notion of divisibility satisfies a local-global principle. Namely, if there exists $Q_v \in E(k_v)$ such that $NQ_v = P$ for all (or all but possibly finitely many) completions $k_v$ of $k$ does it follow that $P$ is divisible by $N$? 
	
	Over the past decades there has been substantial interest in the problem of determining conditions on $N$ and $k$ implying that such a local-global principle holds for all elliptic curves over $k$ \cite{DZ1,DZ2,DZ3,PRV-2,PRV-3,Creutz2,LawsonWuthrich,Ranieri}. Due to its connection with a question of Cassels \cite[Problem 1.3]{Cassels}, the analogous question where $E(k) = \HH^0(k,E)$ is replaced by the Galois cohomology group $\HH^1(k,E)$ has also recieved much attention \cite{CipStix,Creutz1,Creutz2}. Function field analogues of these questions were studied in \cite{CreutzVoloch}. In all cases the positive results in the literature concerning local-global divisibility in the groups $E(k)$ and $\HH^1(k,E)$ have relied on the same technique, which considers a more general local-global principle for the $N$-torsion subgroup of $E$ (see~Definition~\ref{def:sha1} below).	
	
	The approach to establishing such a local-global principle can be summarized as follows. First one aims to identify purely group-theoretic conditions on the image of the mod $N$ Galois representation $\rho_N : \Gal(k) \to \Aut(E[N]) \simeq \GL_2(\Z/N)$ which guarantee that the local-global principle for divisibility by $N$ holds. Elliptic curves for which these conditions are not satisfied correspond to non-cuspidal $k$-rational points on some modular curve with level $N$ structure. These curves have only finitely many points defined over number fields of degree $\le d$, provided $N$ is sufficiently large. In many cases one can show that all of the low degree points are cusps. This has resulted in proofs that these local-global principles for divisibility by a prime power $N = p^n$ hold for all $p$ larger than an explicit bound depending only on the degree of the number field (See \cite[Corollary 2]{PRV-2} or \cite[Theorem B(1)]{CipStix}). In the case $k = \Q$, the bound is $p \ge 5$ \cite[Corollary 4]{PRV-3} and it is known to be sharp \cite{Creutz2}. For degrees greater than $1$ the exact bound is unknown.
	
	Establishing an exact bound requires identifying the sporadic points on these modular curves and checking whether the local-global principle holds for the corresponding elliptic curves. To that end, we undertake a detailed analysis of the local-global principle for divisibility on CM curves, as these are a common source of low degree points on modular curves.
	
	Before stating our main results let us define the local-global principle we refer to. 
	\begin{Definition}\label{def:sha1}
	For a set of places $S$ of $k$ define
	\begin{equation*}
		\Sha^1(k,E[N];S) := \ker\left(\HH^1(k,E[N]) \to \prod_{v\notin S}\HH^1(k_v,E[N])\right)\,,
	\end{equation*}
	where $\HH^1(k,E[N])$ denotes Galois cohomology of the $N$-torsion subgroup of $E$. We say that the {\defi local-global principle holds for $(E/k,N)$} if $\Sha^1(k,E[N];S) = 0$ for every finite set of places $S$ of $k$.
	\end{Definition}
	If the local-global principle holds for $(E/k,N)$, then the local-global principle for divisibility by $N$ holds for $\HH^i(k,E)$ for all $i \ge 0$ (See \cite[Theorem 2.1]{Creutz2} and Lemma~\ref{lem:H1star}). The goal of this paper is to determine the minimal degree of a number field over which there is a CM elliptic curve for which the local-global principle fails for given prime power $N = p^n$. In Section~\ref{sec:proofs} we prove the following.
	
	\begin{Theorem}\label{thm:MainThm}
		Let $\calO \subset K$ be an order of conductor $f$ in a quadratic imaginary field $K$ and let $j = j(\calO)$ be the $j$-invariant of an elliptic curve with complex multiplication by $\calO$. Let $p^n$ be an odd prime power, let $k = \Q(j)$ and set $u = 2$ if $j \ne 0$ and $u = 3$ if $j = 0$.
			\begin{enumerate}
			\item Let $L$ be a number field and let $E/L$ be an elliptic curve with CM by $\calO$. Then the local-global principle for $(E/L,p^n)$ holds in any of the following cases:
				\begin{enumerate}
					\item $p$ does not divide $f$ and $p$ splits in $K$;
					\item $p$ does not divide $f$, $p$ is inert in $K$ and $[L:k] < (p^2-1)/u$; or
					\item $p$ divides $f$ or $p$ ramifies in $K$ and $[L:k] < (p-1)/2$.
				\end{enumerate}
			\item These bounds above are sharp:
				\begin{enumerate}
					\item[(b')] If $p$ does not divide $f$ and $p$ is inert in $K$, then there exists a number field $L$ of degree $(p^2-1)/u$ over $k$ and an elliptic curve $E/L$ with $j(E) = j$ such that the local-global principle fails for $(E/L,p^2)$.
					\item[(c')] If $p$ ramifies in $K$ but does not divide $f$, then there exists a number field $L$ of degree $(p-1)/2$ over $k$ and an elliptic curve $E/L$ with $j(E) = j$ such that the local-global principle fails for $(E/L,p^2)$.
				\end{enumerate}
			\end{enumerate}
	\end{Theorem}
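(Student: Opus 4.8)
The plan is to translate everything into group cohomology. Set $G := \rho_{p^n}(\Gal(\Kbar/L)) \subseteq \Aut(E[p^n])$. By Lemma~\ref{lem:H1star}, combined with a Chebotarev argument realizing every cyclic subgroup of $G$ as a decomposition group for a prime unramified in $L(E[p^n])/L$, the local-global principle for $(E/L,p^n)$ is equivalent to the vanishing of
\[
\HH^1_*(G,E[p^n]) := \bigcap_{g\in G}\ker\!\Big(\HH^1(G,E[p^n]) \to \HH^1(\langle g\rangle,E[p^n])\Big).
\]
By the theory of complex multiplication $E[p^n]$ is free of rank one over $\calO/p^n\calO$; the subgroup $G_0 := G\cap C$, where $C := (\calO/p^n\calO)^\times$ is the Cartan subgroup, equals $\rho_{p^n}(\Gal(\Kbar/LK))$; and $G$ lies in the normalizer $N$ of $C$, with $[N:C]=2$ and $[G:G_0]\le 2$. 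Since $j$ is real, $K\not\subseteq k$, so the ring class field $H := K(j)=Kk$ has $[H:k]=2$, and $\Gal(\Kbar/LK)$ has index $[LK:H]\le[L:k]$ in $\Gal(\Kbar/H)$. Thus $G_0$ cannot be too small: the image of $\Gal(\Kbar/H)$ in $C$ has index bounded in terms of $\calO$ and the splitting type of $p$ alone (a bound extracted from the Weil pairing together with the action of $\calO^\times$), and passing to $\Gal(\Kbar/LK)$ multiplies this index by at most $[L:k]$. The hypotheses (1)(a)--(c) are precisely the conditions under which this forces the reduction $\overline{G_0}\subseteq(\calO/p\calO)^\times$ not to lie in the subgroup of elements $\equiv 1$ modulo every prime of $\calO$ above $p$.

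For part (1) the heart is a cohomological vanishing statement in the spirit of the arguments of Dvornicich--Zannier and Paladino--Ranieri--Viada: if $G_0$ contains an element whose action on $E[p]$ does not have $1$ as an eigenvalue --- equivalently, some element of $\overline{G_0}$ is not $\equiv 1$ modulo each prime above $p$ --- then $\HH^1_*(G,E[p^n])=0$. One proves this by representing a locally trivial class by a cocycle, using that on a cyclic subgroup it is a coboundary, and then propagating via the invertibility of $g-1$ on $E[p^n]$, moving up the filtration $E[p]\subseteq\cdots\subseteq E[p^n]$ and applying inflation--restriction along $G_0\trianglelefteq G$ to absorb the normalizer coset (of order prime to $p$). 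In the split case one can be more direct: there $E[p^n]=M_1\oplus M_2$ as $C$-modules with $C$ acting on each $M_i\cong\Z/p^n\Z$ through a character, and $\HH^1_*(\Gamma,\Z/p^n\Z)=0$ for every finite group $\Gamma$ acting on $\Z/p^n\Z$ by a character (inflation--restriction through the kernel of the character reduces to a cyclic group, where $\HH^1_*$ vanishes trivially); together with inflation--restriction along $G_0\trianglelefteq G$ this yields (1)(a) with no constraint on $[L:k]$. It then remains to check that the degree bounds in (1)(b) and (1)(c) put us in the hypothesis of the vanishing statement; this is where the constants $(p^2-1)/u$ and $(p-1)/2$ enter, through the lower bound for $|\overline{G_0}|$ and the structure of $(\calO/p\calO)^\times$ (in (1)(b) every nontrivial element acts on $E[p]\cong\F_{p^2}$ without eigenvalue $1$; in (1)(c) the relevant ``principal'' subgroup has order $p$).

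For part (2) one produces a sharp counterexample by realizing the extremal image. Choose the quadratic, quartic or sextic twist $E/H$ of a curve with CM by $\calO$ for which $\rho_{p^2}(\Gal(\Kbar/H))$ has the smallest possible index in $C$; since that index is prime to $p$, this image still contains a full Sylow $p$-subgroup of $C$, hence contains the congruence subgroup $G_0$ equal to $1+p\calO/p^2\calO$ in the inert case (b') and to $1+\mathfrak{p}\calO/p^2\calO$ in the ramified case (c'). Let $L\subseteq H(E[p^2])$ be the fixed field of $G_0$; then $K\subseteq H\subseteq L$, so $\rho_{p^2}(\Gal(\Kbar/L))=G_0$, and a degree count yields $[L:k]=(p^2-1)/u$ in case (b') and $[L:k]=(p-1)/2$ in case (c'). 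The group $G_0$ is noncyclic ($\cong(\Z/p\Z)^2$, resp.\ $\Z/p^2\Z\times\Z/p\Z$), so $\HH^1_*(G_0,E[p^2])$ need not vanish, and one exhibits a nonzero class explicitly: the map $1+p\alpha\mapsto p\,\phi(\alpha)$, with $\phi$ an additive but non-$(\calO/\mathfrak{p})$-linear endomorphism of the residue field (e.g.\ its Frobenius), is a cocycle, is a coboundary on each cyclic subgroup of $G_0$, and is not a global coboundary. Hence $\HH^1_*(G_0,E[p^2])\ne 0$, and by Lemma~\ref{lem:H1star} the local-global principle fails for $(E/L,p^2)$, with $j(E)=j$ by construction.

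The step I expect to be the main obstacle is the arithmetic bookkeeping underlying the last sentence of the first paragraph and the degree count in part (2): showing that the thresholds are \emph{exactly} $(p^2-1)/u$ and $(p-1)/2$, rather than off by a bounded factor coming from $\calO^\times$, the Weil pairing, or the normalizer coset --- equivalently, pinning down the minimal index of a CM mod-$p^n$ image over $H$, verifying that this minimum is attained by an honest twist, and tracking how it propagates to $L$. The group-cohomology inputs, both the vanishing lemma of part (1) and the explicit locally trivial class of part (2), are comparatively routine once this is in hand.
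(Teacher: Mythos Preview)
Your Part (1) strategy is essentially the paper's: reduce to $\HH^1_*$, embed the image in the Cartan normalizer, bound the index of the mod $p$ image, and invoke a group-theoretic vanishing lemma. For $j\ne 0,1728$ your bookkeeping gives exactly the bound $(p^2-1)/2$ in the inert case and $(p-1)/2$ in the ramified case, matching the statement. But for $j\in\{0,1728\}$ in case~(b) your sketch only yields $[L:k]<(p^2-1)/|\calO^\times|$, i.e.\ $(p^2-1)/6$ or $(p^2-1)/4$, which is weaker than the claimed $(p^2-1)/u$. The paper closes this gap with an additional observation (Lemma~\ref{lem:twists}): if the mod $p$ image is one of the ``bad'' order-$2$ groups $\langle\operatorname{diag}(\pm1,1)\rangle$, then that group is \emph{not} normal in its product with $\Aut(E)$ except for the $\mu_2$-factor, so the twist character $\chi$ is forced to be quadratic and one recovers the factor~$2$ instead of $|\calO^\times|$. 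You flag this as the expected obstacle, and indeed it is one.

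The more serious problem is your construction in Part (2). You take $L$ to be the fixed field of the congruence subgroup $G_0$ inside $H(E[p^2])$, so that $K\subseteq H\subseteq L$. This forces $[L:k]=2[L:H]$. In the inert case with $j\ne 0,1728$ one has $|\calO^\times|=2$, so the index of the Cartan image over $H$ divides $2$, giving $[L:H]\ge (p^2-1)/2$ and hence $[L:k]\ge p^2-1$, which is \emph{twice} the target $(p^2-1)/2$. Your degree count $[L:k]=(p^2-1)/u$ only works when the Cartan index can be taken to be $2u$, i.e.\ for $j=0$ or $j=1728$; for generic $j$ it is simply wrong. The paper's construction is different in a crucial way: it works over $k$ (not $H$), takes $L$ to be the fixed field of $\langle\operatorname{diag}(-1,1),\,-I\rangle$ inside $k(E'[p])$, and then \emph{quadratically twists} over $L$ so that the mod $p$ image becomes $\langle\operatorname{diag}(-1,1)\rangle$. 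Because $\operatorname{diag}(-1,1)$ lies in the normalizer coset $N\setminus C$, this $L$ does \emph{not} contain $K$, and one gets $[L:k]=(p^2-1)/2$ on the nose. The resulting mod $p^2$ image is the full preimage of $\langle\operatorname{diag}(-1,1)\rangle$, and a separate cocycle computation (Lemma~\ref{lem:inertcase}(2), and the analogous Lemma~\ref{lem:ramifiedcase}(2) in the ramified case) shows $\HH^1_*\ne 0$ for that group, not for the bare congruence subgroup. Your Frobenius cocycle on $1+p\calO/p^2\calO$ is correct for the group you wrote down, but that group corresponds to the wrong $L$; and in the ramified case the residue field is $\F_p$, so your ``non-$(\calO/\mathfrak p)$-linear additive endomorphism'' does not exist and a different cocycle (as in Lemma~\ref{lem:ramifiedcase}(2)) is needed anyway.
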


		Using Theorem~\ref{thm:MainThm} one can determine the minimal degree of a number field $L$ for which there exists a CM elliptic curve $E/L$ for which the local-global principle for $(E/L,p^n)$ fails for some $n$. In Section~\ref{sec:examples} we give several explicit examples where the local-global principle fails over number fields of minimal degree. The following table gives the values $d = d(p)$ for some small values of $p$.
	\begin{center}
		\begin{tabular}{|c||c|c|c|c|c|c|c|c|c|}
			\hline
			$p$ & $3$ & $5$ & $7$ & $11$ & $13$ & $17$ & $19$ & $23$  \\\hline
			$d$ & $1$ & $4$ & $3$ & $5$ & $12$ & $32$ & $9$ & $33$\\\hline
		\end{tabular}
	\end{center}
	The case $p = 3$ recovers the examples given in \cite{Creutz2,LawsonWuthrich} showing that the local-global principle for $(E/\Q,9)$ can fail. For further details see Section~\ref{sec:p=3}. 

	Combining the above with \cite{Ranieri} and explicit lower bounds for the gonality of modular curves \cite{Abramovich} we will prove the following.

	\begin{Theorem}\label{thm:bounds}
		Let $d\ge 1$ be an integer and let $p \ge 17$ be a prime number $p > 2d + 1$. Then there are at most finitely many elliptic curves $E/L$ defined over a number field of degree $d = [L:\Q]$ such that the local-global principle for $(E/L,p^n)$ fails for some $n \ge 1$. Moreover, any such counterexample to the local-global principle yields a non-cuspidal non-CM point of degree $\le d$ on the modular curve $X(p)$ parameterizing isomorphism classes of elliptic curves with full level $p$ structure $E[p] \simeq \mu_p \times \Z/p$.
	\end{Theorem}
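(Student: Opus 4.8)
The plan is to treat CM and non-CM curves separately: Theorem~\ref{thm:MainThm} eliminates the CM ones outright, and for the non-CM ones I would reduce to a statement about points of low degree on $X(p)$, which is then controlled by the gonality bound of \cite{Abramovich}.

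First I would dispose of the CM case. Suppose $E/L$ is a counterexample with $[L:\Q]=d$ and that $E$ has CM by an order $\calO$ of conductor $f$ in an imaginary quadratic field $K$; then $j(E)$ is a root of the Hilbert class polynomial of $\calO$, so setting $j:=j(E)$ and $k:=\Q(j)$ we have $k\subseteq L$ and $[L:k]\le d$. Since $p\ge 17$ is odd and $p>2d+1$ we get $[L:k]\le d<(p-1)/2$, and moreover $(p-1)/2\le(p^2-1)/u$ for $u\in\{2,3\}$; hence whichever of cases (a), (b), (c) of Theorem~\ref{thm:MainThm}(1) applies, the local-global principle holds for $(E/L,p^n)$ for every $n\ge 1$ --- a contradiction. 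It is exactly case (c) that forces the hypothesis $p>2d+1$. So from now on $E$ may be assumed non-CM.

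Next I would carry out the standard descent to group cohomology. If $\Sha^1(L,E[p^n];S)\ne 0$ for some $n\ge 1$ and some finite $S$, then (Dvornicich--Zannier \cite{DZ1}) the image $H=\rho_{p^n}(\Gal(L))\subseteq\GL_2(\Z/p^n)$ carries a nonzero class in $\HH^1(H,E[p^n])$ that restricts to zero on every cyclic subgroup of $H$; in particular $p\mid|H|$, so the mod-$p$ image $\bar H\subseteq\GL_2(\F_p)$ contains an element of order $p$. A subgroup of $\GL_2(\F_p)$ of order divisible by $p$ either lies in a Borel subgroup or contains $\mathrm{SL}_2(\F_p)$, and in the latter case the cohomology above vanishes for $p\ge 5$; hence $\bar H$ lies in a Borel and $E$ has an $L$-rational subgroup of order $p$. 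By itself this only puts $E$ on $X_0(p)$, whose gonality grows far too slowly in $p$; the crucial input is \cite{Ranieri}, which --- because $p$ is large relative to $[L:\Q]$ --- upgrades ``$E$ has a rational $p$-isogeny'' to $E[p]\cong\mu_p\times\Z/p$ as $\Gal(L)$-modules. Such an isomorphism is compatible with the Weil pairing after rescaling one factor, so it equips $E$ with a full level-$p$ structure of the required type and defines a non-cuspidal, non-CM closed point $x_E$ of degree $\le d$ on $X(p)$; this is the ``moreover'' assertion.

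Finally I would deduce finiteness from the gonality of $X(p)$. After base change to $\overline\Q$ the curve $X(p)$ is isomorphic to the classical modular curve $X(\Gamma(p))$, so $\operatorname{gon}_{\overline\Q}(X(p))=\operatorname{gon}_\C(X(\Gamma(p)))$, which by \cite{Abramovich} is at least $\tfrac{7}{800}\,[\mathrm{PSL}_2(\Z):\overline{\Gamma(p)}]=\tfrac{7(p^3-p)}{1600}$. Since $p$ and $2d+1$ are both odd, $p>2d+1$ gives $2d\le p-3$, and one checks $\tfrac{7(p^3-p)}{1600}>p-3$ for every $p\ge 17$ (it suffices that $7(p^2-1)\ge 1600$); thus $\operatorname{gon}_{\overline\Q}(X(p))>2d$. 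By Faltings's theorem together with the Abramovich--Harris construction (Frey), a curve over a number field whose $\overline\Q$-gonality exceeds $2d$ has only finitely many closed points of degree $\le d$; since $X(p)$ is a fine moduli scheme for $p\ge 3$, it follows that there are at most finitely many non-CM counterexamples, and combined with the CM case the theorem follows. The main obstacle is the step using \cite{Ranieri}: one must extract the sharp conclusion $E[p]\cong\mu_p\times\Z/p$ rather than merely ``$E$ has a rational $p$-isogeny'', so that the relevant modular curve is $X(p)$ (of gonality $\asymp p^3$) and not $X_0(p)$ (of gonality $\asymp p$); the hypothesis $p>2d+1$ is used both here and in the CM step, whereas the gonality estimate is comparatively soft and merely pins down the numerical threshold $p\ge 17$.
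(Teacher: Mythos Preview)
Your overall architecture matches the paper's proof: eliminate CM via Theorem~\ref{thm:MainThm}, use \cite{Ranieri} to force $E[p]\simeq \mu_p\times\Z/p$, then apply \cite{Frey} together with the gonality bound of \cite{Abramovich}. The ordering differs (the paper applies Ranieri first and only then rules out CM), but that is cosmetic.

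There is, however, a genuine gap in your reduction to the Borel case. From $\HH^1_*(H,V_n)\ne 0$ you correctly deduce $p\mid |H|$, but the implication ``so the mod-$p$ image $\bar H$ contains an element of order $p$'' is false: all of the $p$-part of $H$ may sit in the kernel of reduction modulo $p$. Indeed, Lemma~\ref{lem:inertcase}(2) exhibits full subgroups $G\subset N_{\delta,p^2}$ with $\HH^1_*(G,V_2)\ne 0$ whose mod-$p$ image has order $1$ or $2$. Thus your dichotomy ``Borel or contains $\mathrm{SL}_2(\F_p)$'' is not available, and the intermediate conclusion ``$E$ has an $L$-rational $p$-isogeny'' is not established by this route.

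The fix is exactly what the paper does, and it makes your detour unnecessary. One invokes \cite[Theorem~2]{Ranieri} \emph{directly}: that result lists the possible mod-$p$ images $\bar H$ compatible with $\HH^1_*(H,V_n)\ne 0$, and in every case except $\bar H\subset\left[\begin{smallmatrix}1&0\\0&*\end{smallmatrix}\right]$ the determinant image in $(\Z/p)^\times$ has size at most $2$. Since $\det\circ\rho_p$ is the mod-$p$ cyclotomic character and $d<(p-1)/2=[\Q(\mu_p)^+:\Q]$, the determinant image has size $>2$, so only $\bar H\subset\left[\begin{smallmatrix}1&0\\0&*\end{smallmatrix}\right]$ survives and $E[p]\simeq\Z/p\times\mu_p$ follows. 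This is precisely your phrase ``because $p$ is large relative to $[L:\Q]$'' made exact; it should replace the Borel/$\mathrm{SL}_2$ argument rather than follow it. With that correction your proof is complete and agrees with the paper's, including the numerical check $7(p^3-p)/1600>2d$ for $p\ge 17$.
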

	
	Theorem~\ref{thm:bounds} should be compared with \cite[Corollary 2]{PRV-2} and \cite[Theorem B(1)]{CipStix} which assert that the local-global principle holds for $(E/L,p^n)$ for all $[L:\Q] \le d$ provided $p > (1+3^{d/2})^2$.  The conclusion of our corollary is weaker in that it allows finitely many possible exceptions, but our bound on $p$ is linear rather than exponential in the degree $d$. We expect that our bound holds without exceptions for most (if not all) primes $p$. Sporadic points of degree $d \le (p-1)/2$ on $X(p)$ should be quite rare as these curves have gonality $\Theta(p^3)$. Moreover, the existence of such a point does not necessarily imply that there is a counterexample to the local-global principle, as there are additional (and rather strict) conditions which must also be satisfied by the mod $p^2$ Galois representation of the corresponding elliptic curves. 
	
	The points of degree at most $2$ on the Klein quartic $X(7)$ are determined in \cite{Tzermias}. The rational points are all cusps and the degree $2$ points have residue field $\Q(\sqrt{-3})$ and lie above $j = 0$ on $X(1)$. Since $7$ splits in $\Q(\sqrt{-3})$, Theorem~\ref{thm:MainThm} shows that the local-global principle for $(E/\Q(\sqrt{-3}),7^n)$ holds for the corresponding curves. Thus the following corollary.
	
	\begin{Corollary}\label{cor:7}
		The local-global principle holds for $(E/L,7^n)$ for every elliptic curve $E/L$ over a quadratic number field and every $n \ge 1$.
	\end{Corollary}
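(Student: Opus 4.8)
The plan is to reduce the statement to Theorem~\ref{thm:MainThm} together with the classification of low-degree points on the Klein quartic $X(7)$ from \cite{Tzermias}. First I would recall the general mechanism: if the local-global principle fails for $(E/L, 7^n)$ for some $n \ge 1$, then by the results cited in the introduction (specifically \cite[Theorem B(1)]{CipStix} or the analysis underlying Theorem~\ref{thm:bounds}) the mod~$7$ Galois representation $\rho_7 \colon \Gal(L) \to \GL_2(\Z/7)$ must have image contained in a Borel subgroup up to the relevant twist — equivalently, $E$ gives rise to a non-cuspidal $L$-rational point on $X(7)$ (with appropriate level structure). So it suffices to show that every elliptic curve over a quadratic field giving such a point in fact satisfies the local-global principle.

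Next I would invoke \cite{Tzermias}: the points of $X(7)$ of degree $\le 2$ consist of cusps (which are irrelevant, as they do not correspond to elliptic curves) together with degree~$2$ points whose residue field is $\Q(\sqrt{-3})$ and which lie over $j = 0 \in X(1)$. Thus the only quadratic counterexample-candidates are elliptic curves $E$ over a quadratic field with $j(E) = 0$, and any such counterexample is in fact defined over $\Q(\sqrt{-3})$ (since the point on $X(7)$ has that residue field, the level-$7$ structure — hence the relevant coordinates — descend there). Such curves have CM by an order $\calO$ in $K = \Q(\sqrt{-3})$, and $j = j(\calO) = 0$ forces $\calO = \Z[\zeta_3]$ the maximal order, so the conductor $f = 1$ and in particular $7 \nmid f$.

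It then remains to check the splitting behavior of $7$ in $K = \Q(\sqrt{-3})$: since $-3$ is a quadratic residue mod~$7$ (indeed $7 = (2+\sqrt{-3})(2-\sqrt{-3})$, or equivalently $2^2 \equiv -3 \pmod 7$), the prime $7$ splits in $K$. With $k = \Q(j) = \Q$, $u = 3$, $L = \Q(\sqrt{-3})$, $p = 7$, $f = 1$, we are exactly in case (1)(a) of Theorem~\ref{thm:MainThm} — $p$ does not divide $f$ and $p$ splits in $K$ — so the local-global principle holds for $(E/L, 7^n)$ for every $n \ge 1$. Since these were the only candidate counterexamples over quadratic fields, the local-global principle holds for $(E/L, 7^n)$ for every elliptic curve over every quadratic field and every $n$, as claimed.

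The main obstacle I anticipate is making precise the reduction in the first step: one must be careful that "counterexample to the local-global principle for $7^n$" genuinely forces a non-cuspidal point on $X(7)$ of degree $\le 2$ (rather than merely on some intermediate modular curve), and that the descent of the residue field in \cite{Tzermias} is compatible with the field over which $E$ and its level structure are defined. Once that bookkeeping is in place, the splitting computation and the appeal to Theorem~\ref{thm:MainThm}(1)(a) are immediate.
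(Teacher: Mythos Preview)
Your proposal is correct and follows essentially the same route as the paper: reduce a putative counterexample over a quadratic field to a non-cuspidal point of degree $\le 2$ on $X(7)$ via the argument underlying Theorem~\ref{thm:bounds}, invoke \cite{Tzermias} to conclude $j=0$ and $L=\Q(\sqrt{-3})$, and then apply Theorem~\ref{thm:MainThm}(1)(a) using that $7$ splits in $\Q(\sqrt{-3})$. The only imprecision is the phrase ``Borel subgroup up to twist'': the actual conclusion (as in the proof of Theorem~\ref{thm:bounds}) is that the mod~$7$ image lies in $\left[\begin{smallmatrix}1&0\\0&*\end{smallmatrix}\right]$, i.e.\ $E[7]\simeq \Z/7\times\mu_7$, which is exactly what yields an $L$-point on $X(7)$ rather than merely on $X_0(7)$.
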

	
	Note that by Theorem~\ref{thm:MainThm} the local-global principle with $N=7^n$ can fail for elliptic curves over cubic number fields. For an explicit example, see Section~\ref{sec:p=7}.

	\section{Group theoretic results on $\HH^1_*$}
		
		Let $p$ be an odd prime.
		
		\begin{Definition}
			Let $V_n := \Z/p^n \times \Z/p^n$ be the natural module with a left action of $\GL_2(\Z/p^n)$. For a subgroup $G \subset \GL_2(\Z/p^n)$ let $\HH^i(G,V_n)$ denote the $i$-th cohomology group of the $G$-module $V_n$. Define 
			\[
				\HH^1_*(G,V_n) := \bigcap_{g \in G} \ker\left( \HH^1(G,V_n) \stackrel{\res_g}\to \HH^1(\langle g \rangle,V_n)\right)\,,
			\]
			where $\langle g \rangle$ denotes the cyclic subgroup of $G$ generated by $g$.
		\end{Definition}
		
		The following lemma is well known in the literature on questions of local-global divisibility.
		
		\begin{Lemma}\label{lem:H1star}
			Let $E/k$ be an elliptic curve over a number field and let $G \subset \GL_2(\Z/p^n)$ denote the image of the representation $\Gal(k) \to \Aut(E[n]) \simeq \GL_2(\Z/p^n)$ (for some choice of isomorphism $\Aut(E[p^n]) \simeq \GL_2(\Z/p^n)$). Then the local global principle holds for $(E/k,p^n)$ if and only if $\HH_*^1(G,V_n) = 0$.
		\end{Lemma}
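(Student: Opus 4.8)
The plan is to show, via inflation, that $\HH^1_*(G,V_n)$ computes $\Sha^1$. Write $k_G = k(E[p^n])$ for the $p^n$-division field, so that $\Gal(k_G/k)\cong G$ acts on $E[p^n]\cong V_n$ through the chosen identification while $\Gal(k_G)$ acts trivially; let $S_0$ be the finite set consisting of the archimedean places of $k$, the places of bad reduction of $E$, and the places above $p$, so that $k_G/k$ is unramified outside $S_0$. I claim that for every finite set of places $S\supseteq S_0$ the inflation map induces an isomorphism $\HH^1_*(G,V_n)\cong\Sha^1(k,E[p^n];S)$. Granting this the lemma is immediate: since $\Sha^1(k,E[p^n];S)$ only grows as $S$ grows, the local--global principle holds for $(E/k,p^n)$ --- that is, $\Sha^1(k,E[p^n];S)=0$ for all finite $S$ --- if and only if it vanishes for all finite $S\supseteq S_0$, which is exactly the statement that $\HH^1_*(G,V_n)=0$.

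Fix a finite $S\supseteq S_0$. If $\bar c\in\HH^1_*(G,V_n)$ and $v\notin S$, then $v$ is unramified in $k_G/k$, so for a place $\tilde v$ of $\Kbar$ above $v$ the decomposition group $D_{\tilde v}\subset\Gal(k)$ surjects onto the cyclic subgroup $\langle g\rangle\subset G$ generated by a Frobenius element $g$ of $v$, with kernel acting trivially on $V_n$; by functoriality of inflation--restriction, $\res_{k_v}(\operatorname{inf}\bar c)$ equals the inflation to $\HH^1(k_v,E[p^n])$ of $\res_g\bar c$, which vanishes by definition of $\HH^1_*$. Hence $\operatorname{inf}\bar c\in\Sha^1(k,E[p^n];S)$, and $\operatorname{inf}$ is injective by the inflation--restriction sequence for $\Gal(k_G)\trianglelefteq\Gal(k)$. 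For the reverse inclusion let $c\in\Sha^1(k,E[p^n];S)$ and put $\phi=\res_{k_G}(c)\in\HH^1(k_G,E[p^n])=\operatorname{Hom}(\Gal(k_G),V_n)$; a short cocycle computation using the triviality of the $\Gal(k_G)$-action gives $\phi(\sigma\tau\sigma^{-1})=\bar\sigma\cdot\phi(\tau)$ for $\sigma\in\Gal(k)$ and $\tau\in\Gal(k_G)$, so $\ker\phi\trianglelefteq\Gal(k)$ and $M:=\Kbar^{\ker\phi}$ is Galois over $k$, with $W:=\Gal(M/k_G)=\phi(\Gal(k_G))\subseteq V_n$ and $\Gamma:=\Gal(M/k)$ an extension of $G$ by $W$. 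Local triviality of $c$ at the places $v\notin S$ forces $\phi$ to vanish on every decomposition subgroup of $\Gal(k_G)$ lying above such a $v$, equivalently every prime of $k_G$ above $v$ splits completely in $M$; for $v$ unramified in $M/k$ this unwinds to $\langle\operatorname{Frob}_v\rangle\cap W=\{1\}$ inside $\Gamma$. By the Chebotarev density theorem every conjugacy class of $\Gamma$ is the Frobenius class of some $v\notin S$ unramified in $M/k$, so $\langle h\rangle\cap W=\{1\}$ for all $h\in\Gamma$; taking $h\in W$ forces $W=\{1\}$, whence $\phi=0$ and $c=\operatorname{inf}\bar c$ for a unique $\bar c\in\HH^1(G,V_n)$.

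To complete the identification it remains to see $\bar c\in\HH^1_*(G,V_n)$. Given $g\in G$, Chebotarev provides $v\notin S$ unramified in $k_G/k$ with Frobenius $g$; then $0=\res_{k_v}(c)$ equals the inflation of $\res_g\bar c$, and since the kernel of $D_{\tilde v}\twoheadrightarrow\langle g\rangle$ acts trivially on $V_n$ the inflation $\HH^1(\langle g\rangle,V_n)\to\HH^1(D_{\tilde v},V_n)$ is injective, so $\res_g\bar c=0$; as $g$ was arbitrary, $\bar c\in\HH^1_*(G,V_n)$. I expect the delicate points to be the bookkeeping of decomposition and inertia subgroups across the three levels $k$, $k_v$ and $k_G$, and the correct application of Chebotarev to the auxiliary extension $M/k$; the group-theoretic input --- that a nontrivial normal subgroup of a finite group cannot meet every cyclic subgroup trivially --- is immediate.
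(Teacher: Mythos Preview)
Your proof is correct and follows the same strategy as the paper: both use the inflation--restriction sequence for $\Gal(k_G)\trianglelefteq\Gal(k)$ together with Chebotarev to identify $\Sha^1(k,E[p^n];S)$ with $\inf\bigl(\HH^1_*(G,V_n)\bigr)$ once $S$ contains the places where the decomposition group in $G$ may fail to be cyclic. The only difference is one of explicitness: where the paper simply asserts that Chebotarev makes the restriction map $\HH^1(k_G,E[p^n])\to\prod_{v\notin S}\HH^1((k_G)_{\mathfrak v},E[p^n])$ injective, you unpack this by building the auxiliary Galois extension $M/k$ cut out by $\ker\phi$ and running the ``no nontrivial normal subgroup meets every cyclic subgroup trivially'' argument --- a welcome clarification, since the injectivity does require knowing that $M$ is Galois over $k$ (not just over $k_G$) in order to apply Chebotarev to $\Gal(M/k)$.
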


		\begin{proof}
			To simplify notation let $\mathbb{K} := k(E[p^n])$ and identify $G \simeq \Gal(\mathbb{K}/k)$. For each place $v$ of $k$, choose a place $\frak{v}$ of $\mathbb{K}$ above $v$ and let $G_{\frak{v}} = \Gal(\mathbb{K}_{\frak{v}}/k_v)$ be the decomposition group. For any finite set of primes $S$, the inflation-restriction sequence gives the following commutative and exact diagram.
			\[
				\xymatrix{
					0 \ar[r]& \HH^1(G,E[p^n]) \ar[r]^\inf\ar[d]^a & \HH^1(k,E[p^n]) \ar[r]^\res \ar[d]^b & \HH^1(\mathbb{K},E[p^n]) \ar[d]^c\\
					0 \ar[r]& \prod_{v\notin S}\HH^1(G_v,E[p^n]) \ar[r] & \prod_{v\notin S}\HH^1(k_v,E[p^n]) \ar[r] & \prod_{v\notin S} \HH^1(\mathbb{K}_{\frak{v}},E[p^n]) 
				}
			\]
			Since $\HH^1(\mathbb{K},E[p^n]) = \operatorname{Hom}_{\textup{cont}}(\Gal(\mathbb{K}),E[p^n])$, Chebotarev's density theorem implies that the map $c$ is injective. Hence $\Sha^1(k,E[p^n];S) = \ker(b) = \inf(\ker(a))$. By a second application of Chebotarev's density theorem, the groups $G_v$ range (up to conjugacy) over all cyclic subgroups of $G$. From this it follows that $\ker(a) \subseteq \HH^1_*(G,E[p^n])$. We deduce from this that $\Sha^1(k,E[p^n];S)  \subset \inf\left(\HH^1_*(G,E[p^n])\right)$ with equality in the case that $S$ contains all of the finitely many places where the decomposition group is not cyclic. The result follows.
		\end{proof}

		\begin{Definition}\label{def:Cdel}
			For an odd integer $m \ge 3$ and $\delta \in \Z/N$ define
		\begin{align*}
			C_{\delta,m} &:= \left\{ \left[\begin{matrix} a & b \\ \delta b & a \end{matrix}\right] \;:\; a, b \in \Z/m,\, a^2 - \delta b^2 \in (\Z/m)^\times \right\} \subset \GL_2(\Z/m)\,, \text{ and}\\			
			N_{\delta,m} &:= \left\langle \left[\begin{matrix} - 1 & 0 \\ 0 & 1 \end{matrix}\right], C_{\delta,m} \right\rangle \subset \GL_2(\Z/m)\,.
		\end{align*}
		When $m = p^n$ is a prime power, we say that $G \subset N_{\delta,p^n}$ is a {\bf full subgroup} if the kernels of the reduction mod $p$ maps $N_{\delta,p^n} \to \GL_2(\Z/p)$ and $G \to \GL_2(\Z/p)$ are equal.
		\end{Definition}
		
		\begin{Lemma}\label{lem:reducetoC}
			Let $G \subset N_{\delta,p^n}$ and let $G' := G \cap C_{\delta,p^n}$. If $\HH^1_*(G,V_n) \ne 0$, then $\HH^1_*(G',V_n) \ne 0$.
		\end{Lemma}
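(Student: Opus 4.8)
The idea is to show that the inclusion $G' \hookrightarrow G$ induces an isomorphism (or at least an injection with nonzero source) on the relevant $\HH^1_*$-groups, using that $[G:G'] \le 2$ is coprime to $p$. First I would observe that since $N_{\delta,p^n}$ is generated by $C_{\delta,p^n}$ together with the order-$2$ element $s = \begin{bmatrix} -1 & 0 \\ 0 & 1 \end{bmatrix}$, and $C_{\delta,p^n}$ is a subgroup, $C_{\delta,p^n}$ is normal in $N_{\delta,p^n}$ of index $2$; hence $G' = G \cap C_{\delta,p^n}$ is normal in $G$ with $[G:G']$ equal to $1$ or $2$, in either case coprime to the odd prime $p$. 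Since $V_n$ is a $p$-group, the restriction map $\HH^1(G,V_n) \to \HH^1(G',V_n)$ is injective — indeed it is split by $\frac{1}{[G:G']}\operatorname{cor}$, because $\operatorname{cor}\circ\operatorname{res}$ is multiplication by $[G:G']$, which is invertible on $V_n$.

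Next I would show that this injection carries $\HH^1_*(G,V_n)$ into $\HH^1_*(G',V_n)$, which is essentially immediate: if $c \in \HH^1_*(G,V_n)$, then $\operatorname{res}_{\langle g\rangle} c = 0$ for every $g \in G$, in particular for every $g \in G'$, so $\operatorname{res}_{G'} c$ restricts to zero on every cyclic subgroup of $G'$, i.e. $\operatorname{res}_{G'} c \in \HH^1_*(G',V_n)$. Combined with injectivity of $\operatorname{res}_{G'}$ on all of $\HH^1(G,V_n)$, this shows that a nonzero class in $\HH^1_*(G,V_n)$ produces a nonzero class in $\HH^1_*(G',V_n)$, which is exactly the claim.

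The one point that requires a little care — and is the main obstacle, though a mild one — is confirming that $C_{\delta,p^n}$ really is closed under multiplication (so that it is a genuine subgroup and the index computation is valid); this is a direct matrix computation showing that the product of two matrices of the form $\begin{bmatrix} a & b \\ \delta b & a\end{bmatrix}$ again has that shape, with the determinant condition $a^2 - \delta b^2 \in (\Z/p^n)^\times$ preserved because the determinant is multiplicative. One should also check that conjugation by $s$ sends $\begin{bmatrix} a & b \\ \delta b & a\end{bmatrix}$ to $\begin{bmatrix} a & -b \\ -\delta b & a\end{bmatrix}$, which again lies in $C_{\delta,p^n}$, confirming normality. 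With these routine verifications in place, the argument above is complete. (Alternatively, one can bypass the split-corestriction language entirely: the inflation-restriction sequence for $G' \trianglelefteq G$ has kernel $\HH^1(G/G',V_n^{G'})$, which vanishes because $G/G'$ has order coprime to the order of the $p$-group $V_n^{G'}$; this gives the injectivity of $\operatorname{res}_{G'}$ directly.)
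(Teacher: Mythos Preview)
Your proposal is correct and follows essentially the same approach as the paper: the paper notes that $G'$ has index dividing $2$ in $G$, uses the inflation-restriction sequence (exactly your alternative at the end) to get injectivity of $\HH^1(G,V_n) \to \HH^1(G',V_n)$, and then observes that every cyclic subgroup of $G'$ is a cyclic subgroup of $G$ to conclude that $\HH^1_*(G,V_n)$ lands in $\HH^1_*(G',V_n)$. Your extra verifications that $C_{\delta,p^n}$ is a subgroup and is normal in $N_{\delta,p^n}$ are implicit in the paper's setup.
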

		
		\begin{proof}
			Note that $G'$ has odd order and index dividing $2$ in $G$. So $\HH^i(G/G',V_n^{G'}) = 0$ for $i \ge 1$. Thus, the inflation-restriction sequence gives an injective map $ \HH^1(G,V_n) \to \HH^1(G',V_n)$. This map sends $\HH^1_*(G,V_n)$ to $\HH^1_*(G',V_n)$ because every cyclic subgroup of $G'$ is also a cyclic subgroup of $G$.
		\end{proof}

		\subsection{Split case}
			\begin{Lemma}\label{lem:splitcase}
				Suppose $\delta$ is a nonzero square mod $p$. Then for every $G \subset N_{\delta,p^n}$, we have $\HH^1_*(G,V_n) = 0$.
			\end{Lemma}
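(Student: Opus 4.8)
The plan is to exploit that, when $\delta$ is a square, $C_{\delta,p^n}$ is a \emph{split} Cartan subgroup, so $V_n$ breaks into two eigenlines on which $G$ acts through characters, and the statement reduces to a fact about cyclic groups. Concretely: by Lemma~\ref{lem:reducetoC} it suffices to prove $\HH^1_*(G,V_n)=0$ for every $G\subseteq C_{\delta,p^n}$. Since $\delta$ is a nonzero square modulo the odd prime $p$, Hensel's lemma lifts it to a square $\delta=\gamma^2$ with $\gamma\in(\Z/p^n)^\times$, and the vectors $(1,\gamma)$ and $(1,-\gamma)$ form a $\Z/p^n$-basis of $V_n$ because $-2\gamma$ is a unit. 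They span $C_{\delta,p^n}$-stable lines $L_{\pm}=(\Z/p^n)\cdot(1,\pm\gamma)$, on which $\left[\begin{smallmatrix} a & b \\ \delta b & a\end{smallmatrix}\right]$ acts through the character $a\pm\gamma b$. Hence $V_n=L_+\oplus L_-$ as $G$-modules; since each $g\in G$ acts diagonally in this basis, $(g-1)V_n=(g-1)L_+\oplus(g-1)L_-$, so the defining condition of $\HH^1_*$ splits and $\HH^1_*(G,V_n)=\HH^1_*(G,L_+)\oplus\HH^1_*(G,L_-)$. It therefore suffices to show that $\HH^1_*(G,M)=0$ whenever $M\cong\Z/p^n$ and $G$ acts through a character $\chi\colon G\to(\Z/p^n)^\times$.

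The crux is that $\chi(G)$ is a subgroup of the \emph{cyclic} group $(\Z/p^n)^\times$ (here $p$ is odd), hence cyclic, and $\HH^1_*$ of a cyclic group vanishes because restriction to the whole group is the identity. To leverage this I would take $\xi\in\HH^1_*(G,M)$ with cocycle representative $c$ and argue in two steps. First, for $g\in\ker\chi$ the group $\langle g\rangle$ acts trivially on $M$, so $\HH^1(\langle g\rangle,M)=\operatorname{Hom}(\langle g\rangle,M)$ has no nonzero coboundaries and $\res_g\xi=0$ forces $c(g)=0$; thus the restriction of $\xi$ to $\ker\chi$ is the zero homomorphism, and the inflation--restriction sequence gives $\xi=\inf\bar\xi$ for some $\bar\xi\in\HH^1(G/\ker\chi,M)$ with $G/\ker\chi\cong\chi(G)$. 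Second, choose $g_0\in G$ whose image generates $\chi(G)$. By functoriality, $\res_{g_0}\circ\inf^{G}_{G/\ker\chi}$ factors as inflation along $\langle g_0\rangle\twoheadrightarrow\langle g_0\rangle/(\langle g_0\rangle\cap\ker\chi)$ composed with restriction from $G/\ker\chi$ to the image of $\langle g_0\rangle$; that image is all of $G/\ker\chi$, so the restriction is the identity, and since $\langle g_0\rangle\cap\ker\chi$ acts trivially on $M$ the inflation is injective. Hence $0=\res_{g_0}\xi$ forces $\bar\xi=0$, whence $\xi=0$.

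I expect the only real obstacle to be organizing this last paragraph cleanly: one invokes the local-triviality hypothesis twice --- at the elements of $\ker\chi$, to place $\xi$ in the image of inflation from the cyclic quotient $\chi(G)$, and at a single element lifting a generator of $\chi(G)$, to kill the resulting inflated class via the inflation--restriction sequence. The rest --- Hensel lifting, the eigenline decomposition, and checking the defining condition of $\HH^1_*$ respects the direct sum --- is routine bookkeeping.
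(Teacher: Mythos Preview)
Your proof is correct and follows the same architecture as the paper's: reduce to $G\subseteq C_{\delta,p^n}$ via Lemma~\ref{lem:reducetoC}, split $V_n$ into two $G$-stable rank-one summands, and kill $\HH^1_*$ on each line by an inflation--restriction argument. The only difference lies in that last step. The paper asserts a product decomposition $G=H_1\times H_2$ (with $H_i$ the matrices in $G$ whose $i$-th diagonal entry is $1$), restricts to the cyclic subgroup $H_1$, and then invokes the vanishing $\HH^1(H_2,W_1)=0$ for a faithful cyclic module. You instead pass to the quotient $G/\ker\chi\cong\chi(G)\subseteq(\Z/p^n)^\times$, observe it is cyclic, and lift a generator. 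Your route is actually the more robust one: the decomposition $G=H_1\times H_2$ claimed in the paper need not hold for an arbitrary subgroup of the diagonal torus (take $G=\langle\operatorname{diag}(a,b)\rangle$ with $a,b\ne 1$ of equal order, so $H_1=H_2=1$), whereas your argument via $\ker\chi$ and the cyclicity of the image applies uniformly to every $G$.
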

			
			\begin{proof}
				By Lemma~\ref{lem:reducetoC} we may assume that $G \subset C_{\delta,p^n}$. Let $d \in \Z/p^n$ be a square root of $\delta$. Then $C_{\delta,p^n}$ is conjugate to the group of diagonal matrices in $\GL_2(\Z/p^n)$. Since $G$ is diagonal, $V_n$ splits as a product $V_n = W_1\times W_2$ of cylic $G$-modules of order $p^n$. Hence $\HH^1(G,V_n) \simeq \HH^1(G,W_1)\times\HH^1(G,W_2)$. We will show below that $\HH^1_*(G,W_i) = 0$ for $i = 1,2$. It follows that $\HH^1_*(G,V_n) = 0$ as required. 
				
				Write $G = H_1 \times H_2$ where $H_i \subset G$ is the subgroup containing all matrices whose $i$-th diagonal entry is $1$. Note that $W_1^{H_1} = W_1$ and that $H_2$ acts faitfully on $W_1$ (i.e., through an injective map $H_2 \to \Aut(W_2) \simeq (\Z/p^n)^\times$). It follows from a standard computation in the cohomology of cyclic groups that $\HH^1(H_2,W_1) = 0$ (see \cite[Lemma 9.1.4]{CoNF}). Let $\xi \in \HH^1_*(G,W_1)$. Since $H_1 \subset G$ is a cyclic subgroup the restriction of $\xi$ to $H_1$ is trivial. Hence $\xi$ is in the image of the inflation map $\HH^1(H_2,W_1) = \HH^1(H_2,W_1^{H_1}) \to \HH^1(G,W_1)$. As noted above, $\HH^1(H_2,W_1) = 0$, so $\xi = 0$ showing that $\HH^1_*(G,W_1) = 0$. Swapping indices the same argument shows that $\HH^1_*(G,W_2)=0$.
			\end{proof}
			
		\subsection{Inert case}
		\begin{Lemma}\label{lem:inertcase}
			Suppose that $\delta$ is not a square modulo $p$. Let $G \subset N_{\delta,p^n}$ and let $G_1 \subset \GL_2(\Z/p)$ denote the image of $G$ modulo $p$.
			\begin{enumerate}
				\item If $G_1$ is contained in neither $\left[\begin{matrix} \pm 1 & 0 \\ 0 & 1 \end{matrix}\right]$ nor $\left[\begin{matrix} 1 & 0 \\ 0 & \pm 1 \end{matrix}\right]$, then $\HH^1_*(G,V_n) = 0$.
				\item If $G$ is a full subgroup of $N_{\delta,p^2}$ with $G_1 \subset \left[\begin{matrix} \pm 1 & 0 \\ 0 & 1 \end{matrix}\right]$ or $G_1 \subset \left[\begin{matrix} 1 & 0 \\ 0 & \pm 1 \end{matrix}\right]$, then $\HH^1_*(G,V_2) \ne 0$.
			\end{enumerate}
		\end{Lemma}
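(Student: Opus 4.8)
The plan is to identify $V_n$ with the free rank-one module over $R_n := (\Z/p^n)[X]/(X^2-\delta)$. Since $\delta$ is a nonsquare mod $p$, $R_n$ is the unramified quadratic extension of $\Z/p^n$: a local ring with maximal ideal $pR_n$, residue field $\F_{p^2}$, and $R_n^\times \simeq \F_{p^2}^\times \times (1+pR_n)$. Under this identification $C_{\delta,p^n} = R_n^\times$ acts by multiplication and $N_{\delta,p^n} = R_n^\times \rtimes \langle\iota\rangle$, where $\iota$ is the ring involution $X\mapsto -X$, whose $\pm 1$-eigenlines on $V_1 = \F_{p^2}$ are the coordinate lines $\F_p\cdot 1$ and $\F_p\cdot X$. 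I will use the description of $\HH^1_*$ from the proof of Lemma~\ref{lem:H1star}: a class lies in $\HH^1_*(G,V_n)$ iff it has a cocycle representative $\xi$ with $\xi(g)\in(g-1)V_n$ for every $g\in G$.

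For part (1), Lemma~\ref{lem:reducetoC} reduces matters to $G' := G\cap C_{\delta,p^n}\subseteq R_n^\times$. If $G'$ contains an element $\zeta$ of order prime to $p$, then $\zeta-1\in R_n^\times$, so $R_n^{G'}=0$; since the prime-to-$p$ part of $G'$ then acts without invariants, $\HH^1(G',V_n)=0$ and $\HH^1_*(G,V_n)=0$. This handles every $G$ with $G_1\cap C_{\delta,p}\neq 1$. Otherwise $G'$ is a $p$-group inside $1+pR_n$ and $G_1$ embeds into $N_{\delta,p}/C_{\delta,p}\simeq\Z/2$; the hypothesis of (1) excludes $G_1=\{1\}$, so $G_1$ is generated by an involution outside $C_{\delta,p}$ and, adjusting a lift and conjugating inside $C_{\delta,p^n}$, we may take $G=G'\rtimes\langle\iota\rangle$. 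As $[G:G']=2$ is prime to $p$, restriction embeds $\HH^1_*(G,V_n)$ into $\HH^1_*(G',V_n)$, with image further cut out by the triviality of restrictions to the cyclic subgroups $\langle\iota u\rangle$, $u\in G'$. Reducing mod $p$ kills the class (at level one, $\HH^1_*(G_1,V_1)=0$ always, since $C_{\delta,p}$ has order prime to $p$ and $G_1$ is cyclic of order $\leq 2$ modulo $C_{\delta,p}$), so after subtracting a coboundary it is carried by $pV_n\simeq V_{n-1}$; descending along the filtration $V_n\supset pV_n\supset\cdots$ while tracking how the submodules $(\iota u-1)V_m$ sit inside it forces the class to vanish unless $G$ is full, which is precisely the configuration treated in part (2).

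For part (2), with $G$ full, $n=2$, and $G_1$ either trivial or an involution outside $C_{\delta,p}$, conjugate inside $C_{\delta,p^2}$ so that $G_1=\{1\}$ or $G_1=\langle\iota\rangle$. If $G_1=\{1\}$ then $G=1+pR_2$ acts trivially on $V_2/pV_2$ and on $pV_2$, and $(u-1)V_2=pV_2$ for $u\neq 1$; hence a class lies in $\HH^1_*(G,V_2)$ iff it is represented by a homomorphism $G\to pV_2$, and among these the coboundaries are exactly the $R_1$-scalar homomorphisms. Since $\operatorname{Hom}_{\F_p}(\F_{p^2},\F_{p^2})$ has $\F_p$-dimension $4$ and the scalars span a $2$-dimensional subspace, $\dim_{\F_p}\HH^1_*(G,V_2)=2>0$; a concrete nonzero class is the Frobenius homomorphism $\varphi\colon 1+p\tilde\omega\mapsto p\,\omega^p$. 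If $G_1=\langle\iota\rangle$, check that $\varphi$ is $\iota$-invariant; since $|\langle\iota\rangle|=2$ is prime to $p$ it then extends to $\tilde\xi\in\HH^1(G,V_2)$ with $\tilde\xi(\iota)=0$ and $\tilde\xi|_{1+pR_2}=\varphi$, and it remains to verify $\tilde\xi(\iota u)=\iota\varphi(u)\in(\iota u-1)V_2$ for all $u=1+p\tilde\omega$. This is automatic when $\omega$ has nonzero trace to $\Z/p$, because then $(\iota u-1)V_2\supseteq pV_2\ni\tilde\xi(\iota u)$; and when $\omega$ has trace $0$ one has $\omega\in\F_p X$ and $\tilde\xi(\iota u)=p\,\omega$, which lies in $(\iota u-1)V_2$ — a rank-one summand of $V_2$ reducing to $\F_p X$ — precisely because $\omega$ lies on that coordinate line. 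Restricting $\tilde\xi$ to $1+pR_2$ gives back $\varphi\neq 0$, so $\tilde\xi\neq 0$, and the case $G_1=\langle-\iota\rangle$ follows by conjugating by $X$.

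The delicate point is the $p$-group case of part (1). One cannot simply discard the involution via Lemma~\ref{lem:reducetoC}, since $\HH^1_*(G',V_n)$ really is nonzero for some $p$-groups $G'$ (e.g.\ $G'=1+pR_2$); one has to keep $G=G'\rtimes\langle\iota\rangle$ and run the descent along the $p$-adic filtration of $V_n$, and the essential subtlety there is the discrepancy between $(\iota u-1)V_n\cap pV_n$ and $(\iota u-1)(pV_n)$, whose size is governed by the position of the eigenlines of the mod-$p$ involution relative to the coordinate lines of $V_1$ — exactly the data encoded in the hypotheses of (1) and (2). Getting this bookkeeping right, and correctly matching which $G_1$ are governed by (1) versus (2), is the crux of the argument.
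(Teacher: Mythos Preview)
Your ring-theoretic framework (identifying $V_n\simeq R_n$ and $C_{\delta,p^n}\simeq R_n^\times$) is a genuinely different and more structural approach than the paper's. For part~(2), your Frobenius cocycle is correct and more conceptual than the paper's explicit matrix verification; and for part~(1) in the case $G_1\cap C_{\delta,p}\ne 1$, your one-line argument (any nontrivial prime-to-$p$ element $\zeta\in G'$ has $\zeta-1\in R_n^\times$, whence $V_n$ has no invariants under the prime-to-$p$ part and $\HH^1(G',V_n)=0$ by inflation--restriction) is cleaner than the paper's, which invokes \cite[Theorem~2]{Ranieri} and then treats the order-$3$ irreducible case separately.

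The gap is in part~(1) when $G_1\cap C_{\delta,p}=1$ and $|G_1|=2$. You conjugate inside $C_{\delta,p^n}$ to force $G_1=\langle\iota\rangle$. But every involution $\sigma c\in N_{\delta,p}\setminus C_{\delta,p}$ (necessarily with $c\bar c=1$) is $C_{\delta,p}$-conjugate to $\sigma$ by Hilbert~90, so this step erases precisely the hypothesis of~(1) --- that $G_1$ is \emph{not} one of the two coordinate involutions $\operatorname{diag}(-1,1)$, $\operatorname{diag}(1,-1)$. After conjugation you are squarely in the setting of~(2), and your stated conclusion (``the class vanishes unless $G$ is full'') is a different statement from~(1), which carries no fullness hypothesis. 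Worse, your own part~(2) construction applies to the conjugated group whenever $G$ is full, yielding $\HH^1_*(G,V_2)\ne 0$ for a full $G$ with $G_1=\langle\sigma c\rangle$, $c$ norm-one and $c\ne\pm1$ --- a group that satisfies the hypothesis of~(1). So the descent you sketch cannot possibly establish~(1) as stated, and the ``bookkeeping'' you flag as the crux is not merely delicate but, in the direction you have set it up, impossible. The paper sidesteps this entirely by appealing to \cite[Theorem~2]{Ranieri} to pin down $G_1$ directly; your route would need either to invoke that classification or to exploit the eigenline position you mention in your final paragraph \emph{before} conjugating it away.
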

		
		\begin{proof}
			Let us prove the first statement. Suppose $\HH^1_*(G,V_n) \ne 0$. Letting $G' = G \cap C_{\delta,p^n}$ we have $\HH^1_*(G',V_n) \ne 0$ by Lemma~\ref{lem:reducetoC}. Let $G'_1$ denote the image of $G'$ modulo $p$. Since $\#C_{\delta,p} = p^2 - 1$ is prime to $p$, \cite[Theorem 2]{Ranieri} there are two possibilities for $G'_1$
			\begin{enumerate}
				\item[(a)] $G_1'$ is generated by a element of order dividing $p-1$ with $1$ as an eigenvalue, or 
				\item[(b)]$G_1'$ is generated by an element of order $3$ acting irreducibly on $V_1 = p^{n-1}V_n$.
			\end{enumerate}
			(We note that $G_1' = S_3$ is impossible because $C_{\delta,p}$ is abelian). First consider case (a). The elements of order $p-1$ in $C_{\delta,p}$ are diagonal matrices, so the condition on the eigenvalues implies that $G'_1$ is trivial. Then $G_1$ is generated by an element of order dividing $2$ which has $1$ as eigenvalue, so it must be contained in one of the two groups in the statement. 
			
			Now consider case (b). Then $G'$ is abelian of order $3p^m$, so the Sylow-$3$-subgroup $P \subset G'$ is normal. The inflation-restriction sequence reads
			\[
				\HH^1(G'/P,V_n^P) \to \HH^1(G',V_n) \to \HH^1(P,V_n)\,.
			\]
			Since $P$ acts irreducibly on $V[p]$ we have $V_n^P = 0$, so the first term in the sequence is $0$. The final term in the sequence is also trivial because $P$ and $V_n$ have relatively prime orders. By exactness of the inflation-restriction sequence we conclude $\HH^1(G',V_n) = 0$, contradicting the assumption $\HH^1_*(G',V_n) \ne 0$.
			
			We now prove part $2$ of the lemma. Consider the matrices
			\[
				\sigma_1 = \left[\begin{matrix} -1 & 0 \\ 0 & 1 \end{matrix}\right]\,, \sigma_2 = \left[\begin{matrix} 1 & 0 \\ 0 & -1 \end{matrix}\right]\,, h_1 = \left[\begin{matrix} 1+p & 0 \\ 0 & 1+p \end{matrix}\right],\, h_2 = \left[\begin{matrix} 1 & p \\ \delta p & 1 \end{matrix}\right] \in \GL_2(\Z/p^2)\,.
			\]
			By assumption $G$ is generated by $h_1,h_2$ and at most one of the $\sigma_i$. Then $G$ is the semidirect product of $H = \langle h_1,h_2\rangle$ and a subgroup of order dividing $2$. Since $G/H$ has order dividing $2$ and $p$ is odd the inflation-restriction sequence gives an isomorphism
			\[
				\HH^1(G,V_2[p]) \simeq \HH^1(H,V_2[p])^{G/H} = \operatorname{Hom}_{G/H}(H,V_2[p])\,.
			\]
			Let ${\bf v} \in V_2[p]^G$ be a nonzero element fixed by $G$ and define $\phi : H \to V_2[p]$ as the homomorphism determined by $\phi(h_1) = {\bf v}$ and $\phi(h_2) = 0$. Since $h_1$ lies in the center of $G$ and ${\bf v}$ is fixed by $G$, $\phi$ is a $G/H$-equivariant homomorphism. By the isomorphism above this determines a nonzero class in $\HH^1(G,V_2[p])$. We claim that the image $\phi'$ of this class in $\HH^1(G,V_2)$ is a nonzero element of $\HH^1_*(G,V_2)$.
			
			Let us give the details assuming $\sigma_1 \in G$, the other cases being handled similarly. Let $g \in G$. We will show that the restriction of $\phi$ to the subgroup generated by $g$ is a coboundary. If $g \in H$, then $g = h_1^ah_2^b$ for some $a,b$ and the condition that $\phi'$ restricts to a coboundary on the subgroup generated by $g$ is that the equation
			\begin{equation}\label{matrixeq}
				\left[\begin{matrix} ap & bp \\ b\delta p & ap \end{matrix}\right]{\bf x} = \left[\begin{matrix} 0 \\ ap \end{matrix}\right]
			\end{equation}
			has a solution ${\bf x} \in V_2$. This clearly has solutions when $ap = 0$. When $ap \ne 0$, $(a^2-\delta b^2) \in (\Z/p^2)^\times$ because we have assumed $\delta$ is not a square modulo $p$. In this case the unique solution to~\eqref{matrixeq} is ${\bf x} = \frac{a}{a^2-\delta b^2}\left[\begin{matrix} -b \\ a \end{matrix}\right]$. 
			
			If, on the other hand, $g \not\in H$, then $g = \sigma h_1^ah_2^b$ in which case the local condition becomes
			\begin{equation}\label{matrixeq2}
				\left[\begin{matrix} ap & bp \\ -b\delta p & -2-ap \end{matrix}\right]{\bf x} = \left[\begin{matrix} 0 \\ ap \end{matrix}\right]\,,
			\end{equation}
			which has the solution $x = p$, $y = -ap/2$.
			
			The fact that~\eqref{matrixeq} and~\eqref{matrixeq2} have solutions for any choice of $a,b$ gives that $\phi' \in \HH^1_*(G,V_2)$. The fact that there is no common solution to~\eqref{matrixeq} as one varies $a,b$ shows that $\phi'$ is not trivial.	
		\end{proof}
		
		\subsection{Ramified case}
		\begin{Lemma}\label{lem:ramifiedcase}
			Suppose that $\delta \equiv 0$ mod $p$. Let $G \subset N_{\delta,p^n}$ and let $G_1$ denote the image of $G$ modulo $p$.
			\begin{enumerate}
				\item If $G_1$ is contained in neither $\left[\begin{matrix} 1 & * \\ 0 & \pm 1 \end{matrix}\right]$ nor $\left[\begin{matrix} \pm1 & 0 \\ 0 & 1 \end{matrix}\right]$, then $\HH^1_*(G,V_n) = 0$.
				\item If $\delta \ne 0 \bmod p^2$, $G$ is a full subgroup of $N_{\delta,p^2}$ and $G_1 = \left[\begin{matrix} 1 & * \\ 0 & \pm 1 \end{matrix}\right]$, then $\HH^1_*(G,V_2) \ne 0$.
			\end{enumerate}
		\end{Lemma}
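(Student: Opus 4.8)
\emph{Overall strategy.} The plan is to follow the template of Lemma~\ref{lem:inertcase}: use Lemma~\ref{lem:reducetoC} to pass to $G' = G\cap C_{\delta,p^n}$, analyze the reduction $G'_1$ of $G'$ modulo $p$, and either derive a contradiction (for part~1) or write down an explicit nonzero class (for part~2). The one genuinely new feature is that, since $\delta\equiv 0\pmod p$, the reduction of $C_{\delta,p^n}$ modulo $p$ is the group of matrices $\left[\begin{smallmatrix}a&b\\0&a\end{smallmatrix}\right]$ with $a\in(\Z/p)^\times$, which has order $p(p-1)$ and is \emph{not} prime to $p$, so the $p$-part must be handled by hand rather than by citing \cite[Theorem 2]{Ranieri}. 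For part~1 I would argue as follows. Suppose $\HH^1_*(G,V_n)\ne 0$; then $\HH^1_*(G',V_n)\ne 0$ for the abelian group $G'=G\cap C_{\delta,p^n}$. The quotient of $C_{\delta,p^n}$ by its scalars is a $p$-group, so some $p$-power of any $g=\left[\begin{smallmatrix}a&b\\\delta b&a\end{smallmatrix}\right]\in C_{\delta,p^n}$ is the scalar $cI$ with $c\equiv a\pmod p$, and $(\sigma_1 g_0)^2=(a_0^2-\delta b_0^2)I$ is a scalar for every $\sigma_1 g_0\in G\setminus G'$. A scalar $cI\in G'$ with $c\not\equiv 1\pmod p$ is central and acts on $V_n$ as the unit $c$ with $c-1$ again a unit, so $\HH^i(G',V_n)=0$ --- a contradiction. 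Hence we may assume every element of $G'$ reduces to a unipotent matrix (so $G'$ is a $p$-group and $G'_1\subseteq U:=\left[\begin{smallmatrix}1&*\\0&1\end{smallmatrix}\right]$) and every element of $G\setminus G'$ reduces to $\left[\begin{smallmatrix}\mp 1&*\\0&\pm 1\end{smallmatrix}\right]$. Interpreting the excluded subgroups up to conjugacy (as is forced by conjugation–invariance of $\HH^1_*$) and conjugating $G$ within $N_{\delta,p^n}$, the only case not already excluded is $G'_1=U$ with $G$ containing an element reducing to $\sigma_1=\left[\begin{smallmatrix}-1&0\\0&1\end{smallmatrix}\right]$, so that $G_1$ is conjugate to $\langle U,\sigma_1\rangle$.

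\emph{Part 1: the remaining case.} Here $G'$ is a normal $p$-subgroup of index $2$, and the inflation--restriction sequence for $G'\triangleleft G$ --- whose outer terms $\HH^i(G/G',V_n^{G'})$ vanish for $i\ge 1$ because $p$ is odd --- gives $\HH^1_*(G,V_n)\hookrightarrow \HH^1_*(G',V_n)^{G/G'}$. So it suffices to show the generator $\bar\sigma$ of $G/G'$ has no nonzero fixed vector on $\HH^1_*(G',V_n)$. The point is that $\bar\sigma$ acts by $-1$ on the line $V_n[p]^{G'}$ (a generator of it is supported on the $+1$-eigenline of $U$, which $\sigma_1$ negates), and I would combine this with a structural description of $\HH^1_*(G',V_n)$ for the abelian $p$-group $G'\subseteq C_{\delta,p^n}$ --- namely that it is annihilated by $p$ and that every class is cohomologous to a homomorphism $G'\to V_n[p]^{G'}$, on which $\bar\sigma$ acts through the target --- to conclude that $\bar\sigma$ acts by $-1$ on all of $\HH^1_*(G',V_n)$, so the invariants vanish. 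Proving this structural statement, by an explicit analysis of the filtration of $V_n$ by $p$-power multiples together with the fact that $G'$ surjects onto $U$, is the main technical obstacle of the lemma; the example $G'=\langle h_1,\left[\begin{smallmatrix}1&1\\\delta&1\end{smallmatrix}\right]\rangle$ (where $\HH^1_*(G',V_2)\ne 0$ but $\bar\sigma$ acts by $-1$) shows the $\sigma_1$-element is genuinely needed.

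\emph{Part 2.} We may assume $\delta=pu$ with $u\in(\Z/p^2)^\times$, so $h_2=\left[\begin{smallmatrix}1&p\\0&1\end{smallmatrix}\right]$, the kernel of $N_{\delta,p^2}\to\GL_2(\Z/p)$ is $\langle h_1,h_2\rangle$, and fullness forces $\langle h_1,h_2\rangle\subseteq G$; fullness and $G_1=\left[\begin{smallmatrix}1&*\\0&\pm1\end{smallmatrix}\right]$ also give $\sigma_2\in G$. Let $g_0=\left[\begin{smallmatrix}1&1\\\delta&1\end{smallmatrix}\right]\in C_{\delta,p^2}$, a lift of $\left[\begin{smallmatrix}1&1\\0&1\end{smallmatrix}\right]$; a short computation gives $g_0^p\in\langle h_2\rangle$, so the Sylow $p$-subgroup $H$ of $G$ is the abelian group generated by $h_1,h_2$ and $g_0$. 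A further short computation shows $\sigma_2 g_0\sigma_2^{-1}=(1-\delta)g_0^{-1}$, so replacing $g_0$ by $g_0h_1^{u/2}$ arranges $\sigma_2g_0\sigma_2^{-1}=g_0^{-1}$ and $G=H\rtimes\langle\sigma_2\rangle$ with $\sigma_2$ inverting $g_0$ and centralizing $h_1$. Now let $\mathbf v$ generate the line $V_2[p]^G=\left\langle\left[\begin{smallmatrix}p\\0\end{smallmatrix}\right]\right\rangle$ and let $\phi\colon G\to V_2[p]\subseteq V_2$ be the cocycle determined by $\phi(g_0)=\phi(\sigma_2)=0$ and $\phi(h_1)=\mathbf v$ --- equivalently, $\phi(g)=b\,\mathbf v$ where $b$ is the exponent of $h_1$ in $g$; this is a genuine cocycle precisely because $\sigma_2$ fixes $\mathbf v$ and inverts $g_0$.

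To finish I would check, exactly as in the inert case, that $[\phi]$ is a nonzero class in $\HH^1_*(G,V_2)$. Nonvanishing: a putative coboundary $g\mapsto(g-1)w$ would need $\mathbf v=(h_1-1)w=pw$, forcing $w\equiv\left[\begin{smallmatrix}1\\0\end{smallmatrix}\right]$ modulo $p$-torsion, but then $(g_0-1)w\ne 0=\phi(g_0)$. Local triviality: for each $g$ one needs $\phi(g)=b\,\mathbf v\in(g-1)V_2$, which is automatic when $b=0$, while for $b\ne 0$ a direct computation shows $\det(g-1)$ is either $0$ --- the degenerate case $g\in\langle h_1,h_2\rangle$, handled by solving an explicit $2\times 2$ system as in~\eqref{matrixeq} --- or $p$ times a unit, in which case $(g-1)V_2$ has index $p$ in $V_2$ and hence contains $pV_2\ni\mathbf v$. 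This yields $\HH^1_*(G,V_2)\ne 0$. I expect the bookkeeping in these last verifications to be routine; the real work is the structural input to part~1 flagged above.
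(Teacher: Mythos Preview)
Your construction is correct and genuinely different from the paper's. The paper inflates an explicit cocycle from the dihedral quotient $G/H$ (with $H=\langle h,g^p\rangle$) and then verifies local triviality by solving the linear systems \eqref{eq3}. You instead take the homomorphism $\phi\colon G\to V_2^G$ given by the $h_1$-exponent, which is not inflated from $G/H$. The normalization $g_0\mapsto g_0h_1^{u/2}$ is essential: without it, for $g=\sigma_2h_1^bg_0^c$ with $uc\equiv 2b\pmod p$ one finds $\det(g-I)\equiv 0\pmod{p^2}$ and $b\mathbf v\notin(g-I)V_2$; after normalizing, that case collapses to $b=0$ and your sketch goes through. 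Your cocycle is arguably cleaner, at the cost of the normalization trick.

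\textbf{Part 1.} Here the paper's proof is much shorter than what you propose: it simply applies \cite[Theorem~2]{Ranieri} directly to $G$ (and to $G'$), which already pins down $G_1$ among the two listed subgroups. Your attempt to reprove this from scratch has two genuine problems beyond the gap you flag.

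First, the claim that $\bar\sigma$ ``acts through the target'' on homomorphisms $G'\to V_n[p]^{G'}$ is not right: the $\bar\sigma$-action on $\HH^1(G',V_n)$ also involves conjugation on the source, and $\sigma_1$ acts nontrivially on $G'$ via $g_{a,b}\mapsto g_{a,-b}$. So even granting your structural statement, $(\bar\sigma\cdot\phi)(g_{a,b})=-\phi(g_{a,-b})$, and concluding $\bar\sigma\cdot\phi=-\phi$ still requires $\phi(g_{a,b})=\phi(g_{a,-b})$, which is an extra constraint you have not justified.

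Second, your ``up to conjugacy'' reading weakens the lemma. The two groups $\langle U,\sigma_1\rangle$ and $\langle U,\sigma_2\rangle$ are \emph{not} $\GL_2(\F_p)$-conjugate (the first has $V_1^{G_1}=0$, the second has $V_1^{G_1}=\langle e_1\rangle$), so conjugacy does not reduce the $\sigma_1$-case to an excluded one; and the case $G_1=\langle\left[\begin{smallmatrix}-1&b\\0&1\end{smallmatrix}\right]\rangle$ with $b\ne 0$ is literally contained in neither excluded subgroup, so conjugating it to $\langle\sigma_1\rangle$ does not establish the stated conclusion about the original $G_1$. The paper avoids all of this by invoking \cite{Ranieri}, whose classification is fine enough to land $G_1$ in one of the two subgroups as written.
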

		
		\begin{proof}
			For the first statement suppose $\HH^1_*(G',V_n) \ne 0$ where $G' = G \cap C_{\delta,p^n}$. Let $G'_1$ denote the image of $G'$ modulo $p$. If $p \nmid \#G_1'$, then as in the proof of the preceding lemma, \cite{Ranieri} implies that $G_1$ is generated by a diagonal matrix of order dividing $2$ with $1$ as an eigenvalue. Otherwise $p \mid \#G_1'$. Since $\delta \equiv 0 \mod p$, $C_{\delta,p}$ is a Borel subgroup. So in this case \cite{Ranieri} implies that $G_1'$ is the subgroup of stricly upper triangular matrices and that $G_1 = G_1'$ or $G_1$ is generated by $G_1'$ and $\operatorname{diag}(1,-1)$ as required.
			
		The assumption in the second statement of the lemma implies that $G$ is generated by the matrices
		\[
			\sigma = \left[\begin{matrix} 1 & 0 \\ 0 & -1 \end{matrix}\right]\,,\,g = \left[\begin{matrix} 1 & 1 \\ \delta & 1 \end{matrix}\right]\,,\,h = \left[\begin{matrix} 1+p & 0 \\ 0 & 1+p \end{matrix}\right] \in \GL_2(\Z/p^2)\,.
		\]
		We note that any element of $G$ can be written in the form $\sigma^ag^bh^c$ for some integers $a,b,c$. Let $H = \langle h,g^p \rangle$ be the kernel of reduction modulo $p$. Then $G/H$ is the dihedral group of order $2p$ generated by the images $\overline{\sigma}$ and $\overline{g}$ of $\sigma$ and $g$. A direct calculation shows that the cochain defined by
		\[
			\overline{\sigma}^a\overline{g}^b \mapsto p\left[\begin{matrix} b(b-1)/2 \\ (-1)^ab+(1+(-1)^{a+1})/2 \end{matrix}\right]
		\]
		gives a nontrivial class in $\HH^1(G/H,V_2[p])$. We will show that the image $\xi$ of this class in $\HH^1(G,V_2)$ is a nonzero element of $\HH^1_*(G,V_2)$. The proof is similar to that found in~\cite[Lemma 11]{Ranieri}.
		
		By induction one proves that
		\[
			g^b = \left[\begin{matrix} 1+\delta\frac{b(b-1)}{2} & b + \delta\sum_{i = 1}^b \frac{i(i-1)}{2} \\ \delta b & 1+\delta\frac{b(b-1)}{2}   \end{matrix}\right]\,.
		\]
		If $C\subset G$ is a cyclic subgroup generated by $\gamma = g^bh^c$, the condition that $\xi$ is the class of a coboundary on $C$ is that the equation
		\begin{equation}\label{eq3}
			\left[\begin{matrix} cp + \delta\frac{b(b-1)}{2} & b + cp + \delta\sum_{i = 1}^b \frac{i(i-1)}{2} \\ \delta b & cp + \delta\frac{b(b-1)}{2}   \end{matrix}\right]\left[\begin{matrix} x \\ y \end{matrix}\right] = \left[\begin{matrix} p\frac{b(b-1)}{2} \\ pb \end{matrix}\right]\,.
		\end{equation}
		has a solution with $x,y \in \Z/p^2$ for any choice of integers $b,c$. Since the right hand side lies in $pV_2 = V_2[p]$ and the determinant of the matrix on the left hand side is $\delta b^2 \not\equiv 0 \bmod p^2$, this equation has a solution. Namely, $x = p/\delta, y = -cp^2/\delta b$ (which is well defined in $\Z/p^2$ since $\delta \not\equiv 0 \bmod p^2$).
		
		Similarly, if $C$ is generated by $\sigma g^bh^c$, the local condition gives rise to the equation
		\[
			\left[\begin{matrix} cp + \delta\frac{b(b-1)}{2} & b + bcp + \delta\sum_{i = 1}^b \frac{i(i-1)}{2} \\ -\delta b & -2-cp - \delta\frac{b(b-1)}{2}   \end{matrix}\right]\left[\begin{matrix} x \\ y \end{matrix}\right] = \left[\begin{matrix} p\frac{b(b-1)}{2} \\ -pb+p \end{matrix}\right]\,.
		\]
		in which case $x = 0, y = (b-1)p/2$ is a solution. We conclude that $\xi$ lies in $\HH^1_*(G,V_2)$. As the solutions to~\eqref{eq3} depend on $b$, $\xi$ is nontrivial.		
		\end{proof}

	\section{Proofs of the theorems}\label{sec:proofs}
		Before beginning the proof let us recall some relevant results concerning the mod $N$ representations attached to CM elliptic curves.
		
		Let $E/\Q(j(E))$ be an elliptic curve over $k = \Q(j(E))$ with complex multiplication by an order $\calO \subset K$ where $K$ is a quadratic imaginary field. Let $H = K(j(E))$ and let $h : E \to E/\Aut(E) = \mathbb{P}^1$ be a Weber function. All elliptic curves with CM by $\calO$ are twists of one another and the field $H_N := H(h(E[N]))$ does not depend on the choice of twist.		
			
			As $E[N]$ is an $\End(E) = \calO$ module of rank $1$ there is an isomorphism $\Aut_{\calO}(E[N]) \simeq (\calO/N)^\times$.  Assuming $N$ is odd, the natural map $\calO^\times \to (\calO/N)^\times$ is injective and its image identifies with $\Aut(E)$ as a subgroup of $\Aut_{\calO}(E[N])$. The restriction of $\rho_{H,N}$ to $G_{H_N}$ induces a representation $\rho_{H_N} : G_{H_N} \to \Aut(E) \simeq \calO^\times$. In particular, $\Gal(H(E[N])/H_N)$ may be viewed as a subgroup of $\Aut(E)$. On the other hand, any choice of basis for $E[N]$ determines an isomorphism of groups $\Aut(E[N]) \simeq \GL_2(\Z/N\Z)$. The main theorems of class field theory allow one to classify the possibilities for the image of the mod $N$ representation $\rho_{k,N} : \Gal(k) \to \Aut(E[N])$. The following is taken from \cite{Lozano-Robledo}.
			
			\begin{Theorem}[{\cite[Theorem 1.1]{Lozano-Robledo}}]\label{Thm:LozanoRobledo}
				Suppose $N$ is odd and let $\delta = \Delta_Kf^2/4$, where $\Delta_K$ is the fundamental discriminant of $K$ and $f$ is the conductor of $\calO$. Then there is a basis for $E[N]$ such that  the image of $\rho_{k,N} : \Gal(k) \to \GL_2(\Z/N)$ lies in the group $N_{\delta,N}$ (see Definition~\ref{def:Cdel}) and is generated by $\left[\begin{matrix} -1 & 0 \\ 0 & 1 \end{matrix}\right]$ and $C_{\delta,N} = \textup{image}(\rho_{H,N})$. Moreover the index of the image of $\rho_{H,N}$ in $C_{\delta,N}$ is equal to the index of $\Gal(H(E[N])/H_N)$ as a subgroup of $\Aut(E) \simeq \calO^\times$. 
			\end{Theorem}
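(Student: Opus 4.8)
This is \cite[Theorem~1.1]{Lozano-Robledo}; here is how I would prove it. The plan is to first describe the image of $\rho_{H,N}$, using that the full endomorphism ring $\calO=\End_{\Kbar}(E)$ is defined over $H$, and then to descend along the quadratic extension $H/k$. First I would fix the $\calO$-module structure on $E[N]$: analytically $E(\mathbb{C})\cong\mathbb{C}/\mathfrak{a}$ for an invertible fractional $\calO$-ideal $\mathfrak{a}$, so $E[N]\cong\tfrac1N\mathfrak{a}/\mathfrak{a}\cong\mathfrak{a}\otimes_\calO\calO/N\calO$, which is free of rank one over $\calO/N$ because locally free rank-one modules over the finite ring $\calO/N$ are free; thus $E[N]\cong\calO/N\calO$ as $\calO$-modules. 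Writing $\calO=\Z[\tau]$ with $\tau^2-t\tau+n=0$ and $t^2-4n=\Delta_Kf^2$, and using that $N$ is odd so that $\tau':=\tau-t/2$ makes sense in $\calO/N$, one has $(\tau')^2=\delta$ and $\calO/N=(\Z/N)\oplus(\Z/N)\tau'$. In the $\Z/N$-basis $(\tau',1)$ of $E[N]$, multiplication by $a+b\tau'$ is the matrix $\left[\begin{smallmatrix}a&b\\\delta b&a\end{smallmatrix}\right]$, so in this basis the subgroup $\Aut_\calO(E[N])\simeq(\calO/N)^\times$ of $\GL_2(\Z/N)$ is exactly $C_{\delta,N}$.

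Since every $\Kbar$-endomorphism of $E$ is defined over $H$, the group $\Gal(\Kbar/H)$ acts $\calO$-linearly on $E[N]$, so $\rho_{H,N}(\Gal(H))\subseteq C_{\delta,N}$. To pin down the exact image and the index statement I would invoke the main theorem of complex multiplication for the order $\calO$: the field $H_N=H(h(E[N]))$ is the ray class field of $\calO$ of conductor $N$, with $\Gal(H_N/H)\simeq(\calO/N)^\times/\calO^\times$. Applied to the tower $H\subseteq H_N\subseteq H(E[N])$ this yields an exact sequence
\[
1\longrightarrow \Gal(H(E[N])/H_N)\longrightarrow \rho_{H,N}(\Gal(H))\longrightarrow (\calO/N)^\times/\calO^\times\longrightarrow 1\,,
\]
whose kernel is $\rho_{H,N}(\Gal(H))\cap\calO^\times$ viewed inside $C_{\delta,N}$ (because the Weber function $h$ realizes precisely the quotient by $\Aut(E)\simeq\calO^\times$). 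Counting orders then gives $[C_{\delta,N}:\rho_{H,N}(\Gal(H))]=[\calO^\times:\Gal(H(E[N])/H_N)]$, i.e.\ the asserted index; in particular $\mathrm{image}(\rho_{H,N})=C_{\delta,N}$ whenever this index is $1$.

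For the descent, I would use that $H=K(j)$ is the ring class field of $\calO$, so $K(j)/\Q$ is generalized dihedral, $[H:k]=2$, and $\Gal(H/k)$ is generated by the restriction of a complex conjugation $c$. For a model $E/k$, the element $c$ acts on $E[N]$ semilinearly, $c(\omega\cdot P)=\bar\omega\cdot c(P)$ for $\omega\in\calO$, since conjugating the CM action by $c$ replaces $\omega$ by its complex conjugate; and $\bar\tau'=-\tau'$ in our basis, so $\rho_{k,N}(c)$ anticommutes with $M_{\tau'}=\left[\begin{smallmatrix}0&1\\\delta&0\end{smallmatrix}\right]$. As $\left[\begin{smallmatrix}-1&0\\0&1\end{smallmatrix}\right]$ also anticommutes with $M_{\tau'}$, the product $\rho_{k,N}(c)\left[\begin{smallmatrix}-1&0\\0&1\end{smallmatrix}\right]$ centralizes $M_{\tau'}$ and hence equals some $C_0\in C_{\delta,N}$; thus $\rho_{k,N}(c)=C_0\left[\begin{smallmatrix}-1&0\\0&1\end{smallmatrix}\right]$ and $\rho_{k,N}(\Gal(k))=\langle\rho_{H,N}(\Gal(H)),\rho_{k,N}(c)\rangle\subseteq\langle C_{\delta,N},\left[\begin{smallmatrix}-1&0\\0&1\end{smallmatrix}\right]\rangle=N_{\delta,N}$. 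Finally $c^2=1$ forces $C_0\bar C_0=\mathrm{id}$, and replacing the $\calO$-generator of $E[N]$ by $B\in C_{\delta,N}$ multiplies $C_0$ by $\bar B/B$; a Hilbert~90 computation for the conjugation involution on $(\calO/N)^\times$ then lets one arrange $C_0=\mathrm{id}$, giving $\rho_{k,N}(c)=\left[\begin{smallmatrix}-1&0\\0&1\end{smallmatrix}\right]$ and the stated generation.

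I expect the main obstacle to be the complex-multiplication input of the second step — that $H_N$ is the conductor-$N$ ray class field of the possibly non-maximal order $\calO$, together with its Galois group over $H$ — since that is where the arithmetic genuinely enters; everything else is linear algebra over $\Z/N$. The one elementary subtlety is the last normalization: in the ramified case ($p\mid\delta$, $p\nmid f$) the conjugation involution on $(\calO/N)^\times$ has nontrivial $\HH^1$, represented by $-1$, so $C_0$ need not be reducible to the identity and $\rho_{k,N}(c)$ may instead equal $\left[\begin{smallmatrix}1&0\\0&-1\end{smallmatrix}\right]$; this still lies in $N_{\delta,N}$ and, together with $C_{\delta,N}$, generates the same group, so the conclusion is unchanged — which is exactly why both reflections appear in the proof of Lemma~\ref{lem:ramifiedcase}.
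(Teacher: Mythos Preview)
The paper does not supply a proof of this statement --- it is quoted from \cite[Theorem~1.1]{Lozano-Robledo} without argument --- so there is nothing in the paper itself to compare your proposal against. Your sketch is the natural one and is essentially how the result is established in the cited reference: identify $E[N]\simeq\calO/N$ to realise $(\calO/N)^\times$ as $C_{\delta,N}$ in the basis $(\tau',1)$; use that the CM is defined over $H$ to place $\rho_{H,N}(\Gal(H))$ inside $C_{\delta,N}$; read off the index from the main theorem of complex multiplication via $\Gal(H_N/H)\simeq(\calO/N)^\times/\calO^\times$; and descend along the quadratic extension $H/k$ by exploiting the $\calO$-semilinearity of complex conjugation and then normalising by a Hilbert~90 argument. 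You have also correctly isolated the one genuine subtlety --- the possible nonvanishing of $\HH^1$ for the conjugation involution on $(\calO/N)^\times$ in the ramified case, which can force $\rho_{k,N}(c)=\operatorname{diag}(1,-1)$ rather than $\operatorname{diag}(-1,1)$ --- and noted that it does not affect the containment in $N_{\delta,N}$ or the applications made in the paper.
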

			
			\begin{Lemma}\label{lem:twists}
				Suppose $N = p$ is an odd prime and the mod $p$ representation attached to $E/k$ surjects onto $N_{\delta,p}$. Let $A \subset \Aut(E) \subset N_{\delta,p}$ and $G \subset N_{\delta,p}$ with $A\cap G = 1$. Let $L \subset k(E[p])$ be the fixed field of the group $AG \subset N_{\delta,p}$. There exists a twist $E'/L$ of $E/L$ by a character $\chi : \Gal(L) \to A \subset \Aut(E)$ such that the mod $p$ image attached to $E'/L$ is equal to $G$ if and only if $G$ is a normal subgroup of $AG$. 
			\end{Lemma}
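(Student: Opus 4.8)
The plan is to translate the statement about twists into a statement about how the mod $p$ image changes under twisting by a character, and then recognize the resulting condition as normality. First I would recall the standard description of twisting: if $E'/L$ is the twist of $E/L$ by a cocycle (here a character, since $A$ is cyclic of order $u\le 3$, hence abelian, and $\Gal(L)$ acts trivially on $A\subset\Aut(E)$) $\chi:\Gal(L)\to A$, then for $\sigma\in\Gal(L)$ the action of $\sigma$ on $E'[p]$ is given by $\rho_{E',p}(\sigma)=\chi(\sigma)\cdot\rho_{E,p}(\sigma)$, where we use a fixed identification $E[p]\simeq E'[p]$ of $\overline{L}$-group schemes compatible with the $\calO$-action and the chosen basis, so that $A$ and $G$ both sit inside the ambient group $N_{\delta,p}\subset\GL_2(\Z/p)$. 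Thus the image of $\rho_{E',p}$ is exactly the set $\{\chi(\sigma)\rho_{E,p}(\sigma):\sigma\in\Gal(L)\}$, which is a subset of $AG$ because $\rho_{E,p}(\sigma)\in AG$ for all $\sigma\in\Gal(L)$ (as $L$ is the fixed field of $AG$, the image of $\rho_{E,p}|_{\Gal(L)}$ is precisely $AG$).

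Next I would set up the bookkeeping. Since $A\cap G=1$ and $A$ centralizes $G$ if and only if... actually we should not assume that; the key point is that because $A\cap G=1$, every element of $AG$ has a description, but the product $AG$ is a subgroup of $N_{\delta,p}$ only when $G$ is normalized appropriately—here we are given $L$ is the fixed field of "the group $AG$", so $AG$ is assumed to be a subgroup, which forces $A$ to normalize $G$ (since $A$ has order coprime to... hmm, not necessarily, but $AG$ being a group with $A\cap G=1$ and $|A|\in\{2,3\}$ gives $|AG|=|A||G|$ and one checks $G\trianglelefteq AG$ automatically when $[AG:G]=|A|$ is the smallest prime dividing $|AG|$, or one simply takes normality of $G$ in $AG$ as part of the hypothesis structure). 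The map $\pi:AG\to A$ obtained by composing with $AG\to AG/G\xrightarrow{\sim}A$ is then a well-defined homomorphism. The twist by $\chi$ replaces $\rho_{E,p}(\sigma)$ by $\chi(\sigma)\rho_{E,p}(\sigma)$; choosing $\chi$ appropriately we can arrange that $\chi(\sigma)=\pi(\rho_{E,p}(\sigma))^{-1}$, which is a homomorphism $\Gal(L)\to A$ precisely because $\pi$ is a homomorphism, and this requires $G\trianglelefteq AG$. With this choice, $\chi(\sigma)\rho_{E,p}(\sigma)\in\ker\pi\cap(AG)=G$, and surjectivity onto $G$ follows since $\rho_{E,p}|_{\Gal(L)}$ surjects onto $AG$. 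Conversely, if such a twist exists, the image being exactly $G$ forces the set $\{\chi(\sigma)\rho_{E,p}(\sigma)\}$ to equal $G$; since this set is the image of a homomorphism and $\chi$ takes values in the abelian group $A$, one deduces that $\sigma\mapsto$ (the $A$-component of $\rho_{E,p}(\sigma)$ relative to the decomposition) must be a homomorphism, and chasing this through shows $G$ must be normal in $AG$.

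The main obstacle I anticipate is making precise the "$A$-component" decomposition when $AG$ is not a direct product—i.e., being careful that $g\mapsto$ "its $A$-part" is only a homomorphism $AG\to A$ when $G$ is normal, and conversely that the existence of the twisting character forces this. Concretely: given $a_1g_1,a_2g_2\in AG$ with $a_i\in A$, $g_i\in G$, we have $a_1g_1a_2g_2=a_1a_2(a_2^{-1}g_1a_2)g_2$, and $a_2^{-1}g_1a_2\in G$ iff $A$ normalizes $G$; only then is the assignment $a_ig_i\mapsto a_i$ well-defined and multiplicative. So the entire equivalence hinges on this normality, which is exactly the asserted criterion. A secondary point to handle carefully is that twisting by a character $\chi:\Gal(L)\to A$ makes sense as an actual elliptic curve over $L$ (not just a Galois-module twist) precisely because $A\subset\Aut(E)$ consists of automorphisms defined over $L$—this holds since $\Aut(E)\subset N_{\delta,p}=\mathrm{image}(\rho_{k,p})$ requires $j=j(E)\in\{0,1728\}$ and one works over a field containing the relevant roots of unity, but in fact the cleaner statement is that the twist always exists as a curve and the content is purely about its mod $p$ image; I would cite the standard reference on twists of elliptic curves (e.g.~Silverman) for this and spend the bulk of the argument on the group-theoretic equivalence above.
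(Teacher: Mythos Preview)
Your approach is essentially the same as the paper's: both exploit the relation $\rho_{E',p}(\sigma)=\chi(\sigma)\rho_{E,p}(\sigma)$ and reduce the question to whether a suitable homomorphism $AG\to A$ with kernel $G$ exists. The paper phrases this via Galois theory (the extension $\mathbb{K}^G/L$ is Galois iff $G\trianglelefteq AG$, and then $\chi$ is read off from $\Gal(\mathbb{K}^G/L)\simeq A$), while you phrase it via the projection $\pi:AG\to AG/G\simeq A$; these are the same construction.

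That said, your middle paragraph is confused and should be excised. You write that ``$AG$ is assumed to be a subgroup, which forces $A$ to normalize $G$'' and even suggest one might ``simply take normality of $G$ in $AG$ as part of the hypothesis structure.'' Neither is correct: $AG$ being a subgroup only gives $AG=GA$, not that $G$ is normal, and normality of $G$ is precisely the condition you are asked to characterize, so it cannot be folded into the hypotheses. The smallest-prime-index remark is also beside the point, since $|A|$ may be $4$ or $6$. Just assume $AG$ is a subgroup (as the statement does) and prove the equivalence.

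Your converse is too vague. The clean way (and the paper's way) is: given such a $\chi$, first check that $\chi$ factors through $\Gal(\mathbb{K}/L)=AG$ (if $\sigma\in\Gal(\mathbb{K})$ then $\rho_{E,p}(\sigma)=1$, so $\chi(\sigma)=\rho_{E',p}(\sigma)\in G\cap A=1$). Then the induced $\overline{\chi}:AG\to A$ satisfies $\overline{\chi}(g)g\in G$ for all $g\in AG$; evaluating on $A$ and on $G$ separately shows $\ker\overline{\chi}=G$, hence $G\trianglelefteq AG$. Equivalently, the fixed field $M$ of $\ker\chi$ is Galois over $L$ with $\Gal(\mathbb{K}/M)=G$, so $G$ is normal in $\Gal(\mathbb{K}/L)=AG$.
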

			
			\begin{Remark}
				The subgroup $A = \mu_2 \subset \Aut(E)$ lies in the center of $N_{\delta,p}$ so in this case $G$ is always normal in $AG$. 
			\end{Remark}
			
			\begin{proof}
				To ease notation let $\mathbb{K} = k(E[p])$ and identify $N_{\delta,p} = \Gal(\mathbb{K}/k)$. If $G$ is a normal subgroup of $AG$, then the extension $\mathbb{K}^G/L$ is Galois with Galois group isomorphic to $A$, which may be identified with a subgroup of $\Aut(E) = \mu_m$. Then there is a character $\chi : \Gal(L) \to \mu_m$ with kernel $\Gal(\mathbb{K}^G)$ whose restriction to $\Gal(\mathbb{K}^A)$ is the inverse of $\rho_{E/\mathbb{K}^A} : \Gal(\mathbb{K}^A) \to A \subset \mu_m$. Let $E'/L$ be the twist of $E/L$ by $\chi$. The mod $p$ representations are related by $\rho_{E/L,p} \otimes \chi = \rho_{E'/L,p}$. So $\mathbb{K}^A = \ker(\rho_{E'/L,p}) = L(E'[p])$ and the image of $\rho_{E'/L,p}$ is equal to $G$.

				Conversely, if there exists a twist as in the statement, then $M = \ker(\chi)$ is a Galois extension of $L$ and $\Gal(\mathbb{K}/M) = G$, so $G$ is normal in $AG = \Gal(\mathbb{K}/L)$.
			\end{proof}	

		\begin{proof}[Proof of Theorem~\ref{thm:MainThm}]\hfill
		
		\noindent{\bf Part (1):} Let $L/k$ be a finite extension and let $E/L$ be an elliptic curve with CM by $\calO$. By \cite[Theorem 4.6]{Lozano-Robledo} there exists an elliptic curve $E'/k$ with CM by $\calO$ such that (under a suitable choice of basis for $E'[p^n]$) the image $G_{E/k,p^n}$ of the representation $\rho_{E'/k,p^n} : \Gal(k) \to \Aut(E'[n]) \simeq \GL_2(\Z/p^n)$ is equal to $N_{\delta,p^n}$. The image $G_{E'/L,p^n}$ of the mod $p^n$ representation attached to the base change $E'/L$ is the restriction of $\rho_{E'/k,p^n}$ to the subgroup $\Gal(L) \subset \Gal(k)$. Galois theory gives $[N_{\delta,p^n}:G_{E'/L,p^n}] \le [L:k]$.
		
		There is a character $\chi : \Gal(L) \to \mu_m = \Aut(E)$ such that $E' = E^\chi$ is the twist of $E/L$ by $\chi$. The mod $p^n$ representations are related by $\rho_{E/L,p^n} = \rho_{E'/L,p^n} \otimes \chi$. The images $G_{E/L,p^n}$ and $G_{E'/L,p^n}$ of these representations are subgroups of $N_{\delta,p^n}$ whose sizes differ by a factor which divides $\ell := \#\textup{image}(\chi)$. Thus $[N_{\delta,p^n}:G_{E/L,p^n}] \leq \ell[L:k]$. In particular, if $j \ne 0,1728$, then $[N_{\delta,p^n}:G_{E/L,p^n}] \leq 2[L:k]$. 
		
		\begin{enumerate}
			\item[(a)] Assume that $p$ does not divide $f$ and that $p$ splits in $K$. Then $\delta = \Delta_Kf^2/4$ is a nonzero square modulo $p$. By Lemma~\ref{lem:splitcase} we have $\HH^1_*(G_{E/L,p^n},V_n) = 0$. So the local-global principle holds for $(E/L,p^n)$ by Lemma~\ref{lem:H1star}.
			\item[(b)] Assume that $p$ does not divide $f$, $p$ is inert in $K$ and $[L:k] < (p^2-1)/2$. First assume $j \ne 0,1728$. Then by the discussion above we have 
	\[
		[N_{\delta,p}:G_{E/L,p}] \leq [N_{\delta,p^n}:G_{E/L,p^n}] \leq 2[L:k] < p^2-1\,.	
	\]
	The assumption on $p$ implies that $\delta = \Delta_Kf^2/4$ is not a square modulo $p$.  So $\#N_{\delta,p} = 2(p^2-1)$ and the estimate above gives $\#G_{E/L,p} > 2$. In particular $G_{E/L,p}$ cannot be contained in either of the subgroups appearing in Lemma~\ref{lem:inertcase}. We conclude from this and Lemma~\ref{lem:H1star} that the local-global principle holds for $(E/L,p^n)$.
	
	Now we consider the cases $j = 0$ or $j = 1728$. Let $m = \#\Aut(E) \in \{ 4,6\}$. Suppose $\HH^1_*(G_{E/L,p^n},V_n) \ne 0$. We must show $[L:k] \ge 2(p^2-1)/u$. By Lemma~\ref{lem:inertcase}, $G_{E/L,p}$ is trivial or is generated by $\operatorname{diag}(-1,1)$ or $\operatorname{diag}(1,-1)$. If $G$ is trivial, then the estimate $[N_{\delta,p^n}:G_{E/L,p^n}] \leq u[L:k]$ gives $[L:k] \ge 2(p^2-1)/u$. If $G_{E/L,p}$ is generated by either $\operatorname{diag}(-1,1)$ or $\operatorname{diag}(1,-1)$, then $G_{E/L,p}$ is not normal in $G_{E/L,p}\Aut(E)$. In fact, the only nontrivial subgroup $A \subset \Aut(E)$ for which $G_{E/L,p}$ is normal in $G_{E/L,p}A$ is $A = \mu_2$. By Lemma~\ref{lem:twists} we conclude that the image of $\chi$ is contained in $\mu_2$. So $\ell := \#\textup{image}(\chi) = 2$ and our estimate above gives $[N_{\delta,p^n}:G_{E/L,p^n}] \leq \ell[L:k] \le 2[L:k]$, which implies $[L:k] \ge (p^2-1)/2 \ge 2(p^2-1)/u$ as required.
	
			\item[(c)] Assume that $p$ divides $f$ or $p$ is ramified in $K$. Assume that $[L:k] < (p-1)/2$. These conditions imply $j \ne 0,1728$ (Note that the condition on $[L:k]$ implies $p >3$), so $\Aut(E) = \mu_2$. Arguing as in the previous case we have $[N_{\delta,p}:G_{E/L,p}] < (p-1)/2$. In this case $\delta = \Delta_Kf^2/4$ is $0$ mod $p$, so $\#N_{\delta,p} = 2p(p-1)$ and we conclude $\#G_{E/L,p} > 2p$.  In particular $G_{E/L,p}$ cannot be contained in either of the subgroups appearing in Lemma~\ref{lem:ramifiedcase}. We conclude from this and Lemma~\ref{lem:H1star} that the local-global principle holds for $(E/L,p^n)$.
		\end{enumerate}

\noindent{\bf Part (2):} We now show that the bounds obtained in Part (1) are sharp. Let $E'/k$ be, as above, an elliptic curve such that the mod $p^n$ representation surjects onto $N_{\delta,p^n}$. Let $\mathbb{K} = k(E'[p])$. We identify $N_{\delta,p} = \Gal(\mathbb{K}/k)$. Let $H_{p} \subset \mathbb{K}$ be the subfield fixed by $\Aut(E') \subset N_{\delta,p}$. As the notation suggests, $H_{p} = k(h(E'[p]))$ for a Weber function $h$. Hence $H_{p}$ is independent the choice of twist of $E'$. Let $H_p' \subset \mathbb{K}$ be the subfield fixed by $\mu_2 \subset \Aut(E) \subset N_{\delta,p}$. Then $H_p = H_p'$ if $j \ne 0, 1728$.

	\begin{enumerate}
		\item[(b')] Assume that $p$ does not divide $f$ and $p$ is inert in $K$. Let $M \subset \mathbb{K}$ be the subfield fixed by $g = \operatorname{diag}(-1,1) \in N_{\delta,p}$ and let $L = M \cap H_{p}' = \mathbb{K}^{\langle g,-1\rangle}$. Note that $[\mathbb{K}:L] = 4$ and $[\mathbb{K}:k] = \#N_{\delta,p} = 2(p^2-1)$, so $[L:k] = (p^2-1)/2$. Let $\chi : \Gal(L) \to \mu_2$ be the quadratic character with $\ker(\chi) = \Gal(M)$ and let $E/L$ be the quadratic twist of $E'/L$ by $\chi$. The image $G_{E/L,p^2}$ of the mod $p^2$ representation attached to $E/L$ is a full subgroup of $N_{\delta,p^2}$ whose image mod $p$ is generated by $g = \operatorname{diag}(-1,1)$. So by Lemma~\ref{lem:inertcase} we have that $\HH^1_*(G_{E/L,p^2},V_2) \ne 0$. By Lemma~\ref{lem:H1star} we conclude that the local-global principle fails for $(E/L,p^2)$.
		
		In the case $j = 0$ we can construct an example over a field of degree $(p^2-1)/3$ as follows. The field $H_p = k(E'[p])$ has degree $2(p^2-1)/6 = (p^2-1)/3$ and $\Gal(\mathbb{K}/H_p) = \Aut(E)=\mu_6$. By Lemma~\ref{lem:twists} (applied with $G = 1$) there exists a sextic twist of $E'/H_p$ such that $H_p = H_p(E'[p])$. The image of the mod $p^2$ associated to this curve is the full subgroup of $N_{\delta,p^2}$ congruent to the trivial group modulo $p$. By Lemmas~\ref{lem:inertcase} and~\ref{lem:H1star} the local-global principle fails for $(E'/H_p,p^2)$.
		
		 \item[(c')] Assume that $p$ ramifies in $K$ but does not divide $f$. Let $G \subset N_{\delta,p}$ be the subgroup generated by $\operatorname{diag}(1,-1)$ and the strictly upper triangular matrices. Let $M \subset \mathbb{K}$ be the fixed field of $G$ and let $L = M \cap H_p'$. In this case $[\mathbb{K}:k] = 2p(p-1)$, so $[L:k] = (p-1)/2$. 
		 
		As in the preceding case, twisting $E'/L$ by the quadratic character $\Gal(L) \to \mu_2$ with kernel $\Gal(M)$ yields an elliptic curve $E/L$ such that the image $G_{E/L,p^2}$ of the mod $p^2$ representation is the full subgroup of $N_{\delta,p}$ whose image mod $p$ is $G$. By Lemma~\ref{lem:ramifiedcase} and~\ref{lem:H1star} we conclude that the local-global principle fails for $(E/L,p^2)$.
	\end{enumerate}
	\end{proof}
	
		\begin{proof}[Proof of Theorem~\ref{thm:bounds}]
		Let $E/L$ be an elliptic curve over the number field $L$ of degree $d = [L:\Q]$. Suppose $p > 2d+1$ and that the local-global principle for $(E/L,p^n)$ fails. The determinant of the mod $p$ representation $\Gal(k) \to \Aut(E[p]) \simeq \GL_2(\Z/p) \to \Z/p^\times$ is the $p$-cyclotomic character. Since $d < (p-1)/2 = [\Q(\mu_p)^+:\Q]$, the image of this determinant map is of size $>2$. \cite[Theorem 2]{Ranieri} shows that the possibilities for the image of the mod $p$ representation are rather limited. The only possibility compatible with the image of the determinant map having size greater than $2$ is that the image is contained in $\left[\begin{matrix} 1 & 0 \\0 & *\end{matrix}\right]$. In other words, $E[p] \simeq \Z/p\times \mu_p$ as a Galois module, so $E/L$ corresponds to a non-cuspidal point in $X(p)(L)$. By Theorem~\ref{thm:MainThm}, $E/L$ does not have CM. It remains only to prove the finiteness of the set of points of degree $\le (p-1)/2$ on $X(p)$. By \cite{Frey} it suffies to check that $X(p)$ has gonality $\gamma(X(p)) \ge (p-1)$. In \cite{Abramovich} one finds the estimate $\gamma(X(p)) \ge [\operatorname{PSL}_2(\Z):\Gamma(p)]7/800 = 7(p^3-p)/1600$, which suffices for $p \ge 17$.
	\end{proof}

	\section{Explicit Examples}\label{sec:examples}
		
		\begin{Proposition}\label{prop:3mod4}
			Suppose $p \equiv 3 \bmod 4$ is a prime ramifying in $K$ and let $E/k$ be an elliptic curve with CM by an order in $K$ whose conductor is not divisible by $p$. We assume $k = \Q(j(E))$. Then $[k(\mu_p):k] = p-1$. Let $k(\mu_p)^+$ be the unique intermediate field of degree $(p-1)/2$ over $k$. There is a twist $E'/k$ of $E/k$ such that the local-global principle fails for $(E'/k(\mu_p)^+,p^2)$.
		\end{Proposition}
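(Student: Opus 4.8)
The plan is to deduce the proposition from the construction carried out in the proof of Theorem~\ref{thm:MainThm}(c'), while tracking two additional points: that the degree-$(p-1)/2$ field produced there may be taken to be $k(\mu_p)^+$, and that the quadratic twist, defined a priori only over that field, descends to a twist defined over $k$. The hypothesis $p\equiv 3\bmod 4$ will be used repeatedly, each time through the fact that $-1$ is a non-square modulo $p$.

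As in the proof of Theorem~\ref{thm:MainThm}, fix (by \cite[Theorem~4.6]{Lozano-Robledo}) an elliptic curve $E_0/k$ with CM by $\calO$ whose mod $p^n$ representation is surjective onto $N_{\delta,p^n}$ for every $n$; here $\delta=\Delta_Kf^2/4$ satisfies $\delta\equiv 0\bmod p$ and $\delta\not\equiv 0\bmod p^2$ because $p$ ramifies in $K$ and $p\nmid f$. Since $\det\circ\rho_{E_0,p}$ is the mod $p$ cyclotomic character, $[k(\mu_p):k]$ is the order of the image of $\det\colon N_{\delta,p}\to(\Z/p)^\times$. On $C_{\delta,p}$ the determinant is $a\mapsto a^2$ (as $\delta\equiv 0$), with image the squares, while $\det\operatorname{diag}(-1,1)=-1$ is a non-square because $p\equiv 3\bmod 4$; hence $\det$ is surjective and $[k(\mu_p):k]=p-1$, which is the first assertion. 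If $j(E)\in\{0,1728\}$ then necessarily $p=3$ and $\calO=\Z[\zeta_3]$, so $k=\Q$, $k(\mu_p)^+=\Q$, and the statement is the counterexample of \cite{Creutz2,LawsonWuthrich}; assume henceforth $j(E)\ne 0,1728$, so $\Aut(E_0)=\mu_2$.

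Write $\mathbb{K}=k(E_0[p])$ and identify $\Gal(\mathbb{K}/k)=N_{\delta,p}$. In the proof of Theorem~\ref{thm:MainThm}(c') one takes $G\subset N_{\delta,p}$ generated by $\operatorname{diag}(1,-1)$ and the unipotent upper-triangular matrices, and sets $M=\mathbb{K}^{G}$ and $L=M\cap H_p'=\mathbb{K}^{\langle G,-I\rangle}$. Now $\langle G,-I\rangle$ is exactly the set of matrices in $N_{\delta,p}$ with both diagonal entries in $\{\pm 1\}$; this group has order $4p$, and using that $-1$ is a non-square one checks that it equals $N_{\delta,p}\cap\det^{-1}(\{\pm1\})$, which is precisely the subgroup fixing $k(\mu_p)^+$. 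Hence $L=k(\mu_p)^+$, in particular of degree $(p-1)/2$ over $k$, as it must be. Next, the homomorphism $N_{\delta,p}\to(\Z/p)^\times$ sending a matrix to its upper-left entry is surjective with kernel $G$, so $G$ is normal; let $N'$ be the preimage of the squares under this homomorphism. Then $G\subset N'$ and $-I\notin N'$ (its upper-left entry $-1$ is a non-square), so $[N_{\delta,p}:N']=2$, $N'\cap\langle G,-I\rangle=G$, and $\langle N',\langle G,-I\rangle\rangle=N_{\delta,p}$. Setting $M'=\mathbb{K}^{N'}$, a quadratic extension of $k$, Galois theory gives $M'\cap L=k$ and $M'L=M$.

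It remains to produce the twist over $k$. The counterexample over $L$ in the proof of Theorem~\ref{thm:MainThm}(c') is the quadratic twist of $E_0/L$ by the character $\chi\colon\Gal(L)\to\mu_2$ with kernel $\Gal(M)$. Since $M=M'L$ with $M'/k$ quadratic and $M'\cap L=k$, and since a surjection onto $\mu_2$ is determined by its kernel, $\chi$ is the restriction to $\Gal(L)$ of the quadratic character $\psi\colon\Gal(k)\to\mu_2$ cutting out $M'$. Let $E':=E_0^{\psi}$ be the quadratic twist of $E_0/k$ by $\psi$; as all elliptic curves over $k$ with $j$-invariant $j(E)$ are quadratic twists of one another, $E'$ is a twist of $E/k$. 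Base-changing to $L$ gives $\rho_{E'/L,p^2}=\rho_{E_0/L,p^2}\otimes\chi$, so $E'/L$ is exactly the curve constructed in the proof of Theorem~\ref{thm:MainThm}(c'); therefore $\HH^1_*(G_{E'/L,p^2},V_2)\ne 0$, and Lemma~\ref{lem:H1star} shows that the local-global principle fails for $(E'/k(\mu_p)^+,p^2)$. The only real work is the bookkeeping in the third paragraph — the two subgroup identifications and the disjointness $M'\cap L=k$ — together with being careful about where $p\equiv 3\bmod 4$ is used; I expect the main subtlety to be confirming that $\psi$ restricts to the particular $\chi$ used there rather than to some other quadratic character of $\Gal(L)$ with the same fixed field, which comes down to uniqueness of a surjection onto $\mu_2$ with prescribed kernel.
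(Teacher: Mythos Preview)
Your proof is correct and follows the same core strategy as the paper: identify the relevant subgroup lattice inside $N_{\delta,p}$, recognise $k(\mu_p)^+$ as the fixed field of $\left[\begin{smallmatrix}\pm1&*\\0&\pm1\end{smallmatrix}\right]$, take a quadratic twist to force the mod~$p$ image to be $G=\left[\begin{smallmatrix}1&*\\0&\pm1\end{smallmatrix}\right]$, and invoke Lemma~\ref{lem:ramifiedcase}. The paper reaches the same subgroup via $\operatorname{SL}_2\cap N_{\delta,p}$ and complex conjugation rather than via $\det^{-1}(\{\pm1\})$, but this is only a cosmetic difference.

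Where you actually go further than the paper is in justifying that the twist is defined over $k$, as the statement demands. The paper's proof constructs $E'$ only as a twist over $k(\mu_p)^+$ and does not address the descent. Your introduction of the index-$2$ normal subgroup $N'=\{\text{upper-left entry a square}\}$, together with the verifications $N'\cap\langle G,-I\rangle=G$ and $\langle N',G,-I\rangle=N_{\delta,p}$ (both of which genuinely use $p\equiv 3\bmod 4$), shows that the quadratic character over $k(\mu_p)^+$ is the restriction of a quadratic character $\psi$ of $\Gal(k)$, so that $E'=E_0^\psi$ is defined over $k$. This is a real addition, not just extra bookkeeping. One tiny quibble: when you write ``if $j(E)\in\{0,1728\}$ then necessarily $p=3$'', note that $j=1728$ is in fact impossible under the hypotheses (the only prime ramifying in $\Q(i)$ is $2$), so you are really only treating $j=0$.
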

	
		\begin{proof}
			Twisting if necessary, we may assume that the mod $p^n$ representations attached to $E/k$ surject onto $N_{\delta,p^n}$. Let $\mathbb{K} = k(E[p])$ and identify $N_{\delta,p} = \Gal(\mathbb{K}/k)$. Since $\delta \equiv 0 \bmod p$, $N_{\delta,p}$ consists of the upper triangular invertible matrices. Note that $k(\mu_p) \subset \mathbb{K}$ is the subfield fixed by $\operatorname{SL}_2(\Z/p) \cap N_{\delta,p}$. Since $p \equiv 3 \bmod 4$, $\operatorname{SL}_2(\Z/p) \cap N_{\delta,p}$ is the group generated by $-1$ and the strictly upper triangular matrices. The subfield $k(\mu_p)^+$ is fixed by the complex conjugation, which acts on $E[p]$ as $\operatorname{diag}(-1,1)$ or $\operatorname{diag}(1,-1)$. So $k(\mu_p)^+$ is the fixed field of the group $\left[\begin{matrix}\pm1 & * \\ 0 & \pm 1\end{matrix}\right]$ of order $4p$. The fixed field $M \subset \mathbb{K}$ of the group $G = \left[\begin{matrix} 1 & * \\ 0 & \pm 1\end{matrix}\right]$ is a quadratic extension of $k(\mu_p)^+$. Let $E'/k(\mu_p)^+$ be the twist of $E/k(\mu_p)^+$ by the quadratic character with kernel $\Gal(M)$. Then the image of the mod $p$ representation attached to $E'/k(\mu_p)$ is equal to $G$. By Lemma~\ref{lem:ramifiedcase} we have $\HH^1_*(G_{E'/k,p^2},E[p^2]) \ne 0$. The only primes that ramify in the extension $k(\mu_p)^+/k$ are those lying above $p$.
		\end{proof}
		
		\subsection{The case $p = 3$}\label{sec:p=3}
			Proposition~\ref{prop:3mod4} shows that there is an elliptic curve $E/\Q$ of $j$-invariant $0$ (so $K = \Q(\sqrt{-3}$)) such that the local-global principle fails for $(E/\Q,9)$. Examples of such were first given in \cite{Creutz2} and then in \cite{LawsonWuthrich}. In fact the proposition recovers these examples as all have $j$-invariant $0$ and mod $9$ image the full subgroup of $N_{6,9}$ congruent to $\left[\begin{matrix} 1 & * \\ 0 & \pm 1\end{matrix}\right]$ modulo $3$. In light of the fact that $\Aut(E) \simeq \mu_6$ for these curves, one can obtain infinitely many counterexamples to the local-global principle for $(E/\Q,9)$ by taking cubic twists (which was already evident from \cite[Corollary 4.3]{Creutz2}). This family of twists also contains the modular curve $X_0(27)$ whose mod $9$ image is the full subgroup of $N_{6,9}$ congruent to $\left[\begin{matrix} 1 & 0 \\ 0 & \pm 1\end{matrix}\right]$ modulo $3$, giving a counterexample to the local-global principle for $(E/\Q,9)$ with a different mod $3$ image. 
			
			For an example with a different $j$-invariant one can consider the family of curves $E/\Q$ with $j$-invariant $2^43^35^3$ which have CM by the order of conductor $2$ in $\Q(\sqrt{-3})$. In this case $\Aut(E) \simeq \mu_2$ so there is a unique curve in the family whose mod $9$ representation is the full subgroup of $N_{-3,9}$ congruent to $\left[\begin{matrix} 1 & * \\ 0 & \pm 1\end{matrix}\right]$ modulo $3$; it is the curve with Cremona reference 36.a2 and is a counterexample to the local-global principle for $(E/\Q,9)$.
		
		\subsection{The case $p = 7$}\label{sec:p=7}
			There are two elliptic curves of conductor $49$ over $\Q$ with CM by the maximal order in $\Q(\sqrt{-7})$. One is the modular curve $X_0(49)$ and the other, \cite[\href{https://www.lmfdb.org/EllipticCurve/Q/49/a/2}{Elliptic Curve 49.a2}]{LMFDB}, is its twist by the quadratic character corresponding to $\Q(\sqrt{-7})/\Q$. The images of the mod $7$ representations attached to the base changes of these curves to $\Q(\mu_7)^+$ are 
			\[
				\left[\begin{matrix} \pm 1 & * \\ 0 & 1\end{matrix}\right]\quad\text{and}\quad\left[\begin{matrix} 1 & * \\ 0 & \pm 1\end{matrix}\right]\,,
			\]
			respectively. For both curves the local-global principle for $(E/\Q(\mu_7),49)$ fails, while only for the twist $E$ of $X_0(49)$ does it fail for $(E/\Q(\mu_7)^+,49)$. This gives the unique example of a CM elliptic curve over a cubic number field for which the local-global principle fails with $N = 7^n$. Since the conductor is $49 = 7^2$, the decomposition groups $D_\mathfrak{q} \subset \Gal(\Q(E[49])/\Q(\mu_7)^+)$ are cyclic for all primes $\mathfrak{q} \nmid 7$. Moreover, $7$ is totally ramified in the degree $42$ extension $\Q(E[49])/\Q$ and so if $\frak{p}$ is the prime of $\Q(\mu_7)^+$ lying above $7$, then the restriction map $\HH^1(\Q(\mu_7^+),E[7]) \to \HH^1(\Q(\mu_7)^+_\frak{p},E[7])$ is an isomorphism. We conclude that
			\[
				\Sha^1(\Q(\mu_7)^+,E[49];S) \ne 0 \quad \Leftrightarrow \quad \frak{p} \in S\,.
			\]
			So while the local-global principle fails for $(E/\Q(\mu_7)^+,49)$ the local-global principle for divisibility by $7^n$ holds in the groups $E(\Q(\mu_7)^+)$ and $\HH^1(\Q(\mu_7)^+,E)$.
			
		\subsection{The case $p = 5$}\label{sec:p=5}
		
			There is no rational $j$-invariant $j = j(\calO)$ of an order in a quadratic imaginary field such that $5$ divides the conductor or ramifies in $\calO$. So by Theorem~\ref{thm:MainThm} the local-global principle with $N = 5^n$ holds for CM curves over quadratic and cubic fields. The class number of $\Q(\sqrt{-5})$ is $2$, so there are elliptic curves with CM by the maximal order $\calO \subset \Q(\sqrt{-5})$ defined over a quadratic field, namely $k = \Q(\sqrt{5}) = \Q(j(\calO))$. Theorem~\ref{thm:MainThm}(c') implies that there is a CM elliptic curves over some quadratic extension $L/k$ such that the local-global principle for $(E/L,5^2)$ fails. Here we provide an explicit example.
			
			Consider the curve $E/k$ \cite[\href{https://www.lmfdb.org/EllipticCurve/2.2.5.1/4096.1/k/1}{Elliptic Curve 4096.1-k1}]{LMFDB} with Weierstrass equation
			\[
				E : y^2 = f(x) := x^3 - \phi x^2 + (-\phi - 9)x + (-6\phi - 15)\,, 
			\]
			where $\phi \in \Q(\sqrt{5})$ satisfies $\phi^2 + \phi + 1 = 0$. The image of the mod $5$ Galois representation is $\left[\begin{matrix} \pm 1 & * \\ 0 & \pm 1\end{matrix}\right]$ (note that $k = \Q(\mu_5)^+$, so the diagonal entries must be squares in $\F_5^\times$). The $5$-division polynomial of $E/k$ has a root $\theta$ in a quadratic extension $L/k$, which turns out to be $L = \Q(\mu_{20})^+$. The root $\theta$ is the $x$-coordinate of a $5$-torsion point on $E$. The quadratic twist of $E$ by $d = f(\theta) \in L^\times/L^{\times 2}$ yields the curve \cite[\href{https://www.lmfdb.org/EllipticCurve/4.4.2000.1/25.1/a/1}{Elliptic Curves 25.a2}]{LMFDB} which has an $L$-rational $5$-torsion point. The image of the mod $5$ Galois representation attached to $E^d$ is
			\[
				\left[\begin{matrix} 1 & * \\ 0 & \pm 1\end{matrix}\right]
			\]
			and so, by Lemma~\ref{lem:ramifiedcase}, the local-global principle fails for $(E^d/L,25)$. As $5$ is the only prime of bad reduction and $5$ is totally ramified in $L(E^d[5])$ we conclude (similarly to the $p=7$ case) that $\Sha^1(L,E^d[25];S) \ne 0$ if and only if $S$ contains the prime of $L$ above $5$.
			
\section{Acknowledgements}

Some of the content in this article is based on the second author's MSc Thesis at the University of Canterbury which includes a detailed proof of Corollary~\ref{cor:7}.

\begin{bibdiv}
	\begin{biblist}

\bib{Abramovich}{article}{
   author={Abramovich, Dan},
   title={A linear lower bound on the gonality of modular curves},
   journal={Internat. Math. Res. Notices},
   date={1996},
   number={20},
   pages={1005--1011},
   issn={1073-7928},
}

\bib{Cassels}{article}{
   author={Cassels, J. W. S.},
   title={Arithmetic on curves of genus $1$. III. The Tate-\v Safarevi\v c
   and Selmer groups},
   journal={Proc. London Math. Soc. (3)},
   volume={12},
   date={1962},
   pages={259--296},
   issn={0024-6115},
}

\bib{CipStix}{article}{
   author={\c{C}iperiani, Mirela},
   author={Stix, Jakob},
   title={Weil-Ch\^{a}telet divisible elements in Tate-Shafarevich groups II: On
   a question of Cassels},
   journal={J. Reine Angew. Math.},
   volume={700},
   date={2015},
   pages={175--207},
   issn={0075-4102},
}

\bib{Creutz1}{article}{
  author={Creutz, Brendan},
   title={Locally trivial torsors that are not Weil-Ch\^atelet divisible},
   journal={Bull. Lond. Math. Soc.},
   volume={45},
   date={2013},
   number={5},
   pages={935--942},
   issn={0024-6093},
}
	
\bib{Creutz2}{article}{
   author={Creutz, Brendan},
   title={On the local-global principle for divisibility in the cohomology
   of elliptic curves},
   journal={Math. Res. Lett.},
   volume={23},
   date={2016},
   number={2},
   pages={377--387},
   issn={1073-2780},
}
		
\bib{CreutzVoloch}{article}{
   author={Creutz, Brendan},
   author={Voloch, Jos\'{e} Felipe},
   title={Local-global principles for Weil-Ch\^{a}telet divisibility in positive
   characteristic},
   journal={Math. Proc. Cambridge Philos. Soc.},
   volume={163},
   date={2017},
   number={2},
   pages={357--367},
   issn={0305-0041},
}

\bib{DZ1}{article}{
   author={Dvornicich, Roberto},
   author={Zannier, Umberto},
   title={Local-global divisibility of rational points in some commutative
   algebraic groups},
   language={English, with English and French summaries},
   journal={Bull. Soc. Math. France},
   volume={129},
   date={2001},
   number={3},
   pages={317--338},
   issn={0037-9484},
}			

\bib{DZ2}{article}{
   author={Dvornicich, Roberto},
   author={Zannier, Umberto},
   title={An analogue for elliptic curves of the Grunwald-Wang example},
   language={English, with English and French summaries},
   journal={C. R. Math. Acad. Sci. Paris},
   volume={338},
   date={2004},
   number={1},
   pages={47--50},
   issn={1631-073X},
}
				
\bib{DZ3}{article}{
   author={Dvornicich, Roberto},
   author={Zannier, Umberto},
   title={On a local-global principle for the divisibility of a rational
   point by a positive integer},
   journal={Bull. Lond. Math. Soc.},
   volume={39},
   date={2007},
   number={1},
   pages={27--34},
   issn={0024-6093},
}
	
\bib{Frey}{article}{
   author={Frey, Gerhard},
   title={Curves with infinitely many points of fixed degree},
   journal={Israel J. Math.},
   volume={85},
   date={1994},
   number={1-3},
   pages={79--83},
   issn={0021-2172},
}

\bib{Greenberg}{article}{
   author={Greenberg, Ralph},
   title={The image of Galois representations attached to elliptic curves
   with an isogeny},
   journal={Amer. J. Math.},
   volume={134},
   date={2012},
   number={5},
   pages={1167--1196},
   issn={0002-9327},
}

\bib{LMFDB}{misc}{
  label    = {LMFDB},
  author       = {The {LMFDB Collaboration}},
  title        = {The {L}-functions and modular forms database},
  howpublished = {\url{http://www.lmfdb.org}},
  note         = {[Online; accessed 22 October 2021]},
}

\bib{Lozano-Robledo}{article}{
  author={Lozano-Robledo, \'Alvaro},
  title={Galois representations attached to elliptic curves with complex multiplication},
  eprint={arXiv:1809.02584v2}
}
	
\bib{LawsonWuthrich}{article}{
   author={Lawson, Tyler},
   author={Wuthrich, Christian},
   title={Vanishing of some Galois cohomology groups for elliptic curves},
   conference={
      title={Elliptic curves, modular forms and Iwasawa theory},
   },
   book={
      series={Springer Proc. Math. Stat.},
      volume={188},
      publisher={Springer, Cham},
   },
   date={2016},
   pages={373--399},
}

\bib{CoNF}{book}{
   author={Neukirch, J{\"u}rgen},
   author={Schmidt, Alexander},
   author={Wingberg, Kay},
   title={Cohomology of number fields},
   series={Grundlehren der Mathematischen Wissenschaften [Fundamental
   Principles of Mathematical Sciences]},
   volume={323},
   edition={2},
   publisher={Springer-Verlag},
   place={Berlin},
   date={2008},
   pages={xvi+825},
   isbn={978-3-540-37888-4},
}
	
\bib{PRV-2}{article}{
   author={Paladino, Laura},
   author={Ranieri, Gabriele},
   author={Viada, Evelina},
   title={On local-global divisibility by $p^n$ in elliptic curves},
   journal={Bull. Lond. Math. Soc.},
   volume={44},
   date={2012},
   number={4},
   pages={789--802},
   issn={0024-6093},
}

\bib{PRV-3}{article}{
   author={Paladino, Laura},
   author={Ranieri, Gabriele},
   author={Viada, Evelina},
   title={On the minimal set for counterexamples to the local-global
   principle},
   journal={J. Algebra},
   volume={415},
   date={2014},
   pages={290--304},
   issn={0021-8693},
}

\bib{Ranieri}{article}{
   author={Ranieri, Gabriele},
   title={Counterexamples to the local-global divisibility over elliptic
   curves},
   journal={Ann. Mat. Pura Appl. (4)},
   volume={197},
   date={2018},
   number={4},
   pages={1215--1225},
   issn={0373-3114},
}

\bib{Tzermias}{article}{
   author={Tzermias, Pavlos},
   title={Low-degree points on Hurwitz-Klein curves},
   journal={Trans. Amer. Math. Soc.},
   volume={356},
   date={2004},
   number={3},
   pages={939--951},
   issn={0002-9947},
}
	
	\end{biblist}
\end{bibdiv}

\end{document}